\tikzset{axis/.style={&lt;-&gt;}}
\newcommand\reallywidehat[1]{%
\savestack{\tmpbox}{\stretchto{%
  \scaleto{%
    \scalerel*[\widthof{\ensuremath{#1}}]{\kern-.6pt\bigwedge\kern-.6pt}%
    {\rule[-\textheight/2]{1ex}{\textheight}}
  }{\textheight}%
}{0.5ex}}%
\stackon[1pt]{#1}{\tmpbox}%
}
 \definecolor{MyBlue}{rgb}{0.05, 0.25, 0.65}
 \definecolor{MyRed}{rgb}{0.90, 0.05, 0.05}
\definecolor{MyGreen}{rgb}{0.05, 0.90, 0.05}
\newcommand{\C}[1]{\mathcal{#1}}
\newcommand{\D}[1]{\mathbb{#1}}
\newtheorem{theorem}{Theorem}[section]
\newtheorem{proposition}[theorem]{Proposition}
\newtheorem{example}[theorem]{Example}
\newtheorem{definition}[theorem]{Definition}
\newcommand{\id}{\mathrm{id}}
\newcommand{\CST}{\mathrm{CST}}
\newcommand{\ZF}{\mathrm{ZF}}
\newcommand{\Cat}{\mathrm{\mathbf{Cat}}}
\newcommand{\TOT}{\Leftrightarrow}
\newcommand{\To}{\Rightarrow}
\newcommand{\sto}{\rightsquigarrow}
\newcommand{\MLTT}{\mathrm{MLTT}} 
\newcommand{\CZF}{\mathrm{CZF}}
\newcommand{\Fam}{\textnormal{\texttt{Fam}}}
\newcommand{\pr}{\textnormal{\texttt{pr}}}
\newcommand{\BST}{\mathrm{BST}}
\newcommand{\Set}{\mathrm{\mathbf{Set}}}
\newcommand{\op}{\mathrm{op}}
\newcommand{\Fun}{\mathrm{Fun}}
\newcommand{\Hom}{\mathrm{Hom}}
\newcommand{\dHom}{\textnormal{\texttt{dHom}}}
\newcommand{\fHom}{\textnormal{\texttt{fHom}}}
\newcommand{\f}{\textnormal{\texttt{fam}}}
\newcommand{\di}{\textnormal{\texttt{dep}}}
\newcommand{\si}{\textnormal{\texttt{s}}}
\newcommand{\Di}{\mathlarger{\mathlarger{\C D}}}
\newcommand{\cf}{\textnormal{\texttt{cofam}}}
\newcommand{\fba}{f \colon b \to a}
\newcommand{\BishSet}{\textnormal{\textbf{BishSet}}}
\newcommand{\Type}{\textnormal{\textbf{Type}}}
\newcommand{\ext}{\bigsqcup}
\newcommand{\wf}{\textnormal{\texttt{wfam}}}
\newcommand{\wfHom}{\textnormal{\texttt{wfHom}}}
\newcommand{\CaT}{\textnormal{CaT}}
\begin{document}

\date{}

\title{\textbf{Categories with dependent arrows}}

\author{Iosif Petrakis\\	
Department of Computer Science, University of Verona\\
iosif.petrakis@univr.it}  

%





\maketitle

\begin{abstract}
\noindent 
We present an abstract, categorical formulation of dependent functions in a fundamental manner and 
independently from the Sigma-construction. For that, 
we define first the notion of a category with family-arrows, or a $\f$-category. 
A $(\f, \Sigma)$-category is a $\f$-category with Sigma-objects, where a $(\f, \Sigma)$-category with a terminal object
is exactly a type-category of Pitts, or a category with attributes of Cartmell. We introduce categories with 
dependent arrows, or $\di$-categories, and we show that every $(\f, \Sigma)$-category is a $\di$-category in a 
canonical way. The notion of a Sigma-object in a $\di$-Category is affected by the existence of dependent arrows, 
and we show that every $(\f, \Sigma)$-category is a $(\di, \Sigma)$-category in a canonical way. \\[2mm]
\textit{Keywords}: Category theory, dependent type theory, categories with attributes
\end{abstract}


\section{Introduction}
\label{sec: intro}

An important foundational difference between Zermelo-Fraenkel Set Theory $(\ZF)$ and category 
theory $(\CaT)$ is that in the latter the notion of (generalised) function i.e., of arrow is fundamental, while in the former
it is reduced to the concept of set. In this sense, $\CaT$ is much closer to Martin-L\"of Type Theory $(\MLTT)$
(see~\cite{ML75, ML84, ML98}) and Bishop Set Theory 
$(\BST)$ (see~\cite{Pe20, Pe22b}), which are theories of types (sets) \textit{and} functions. This feature of Bishop's 
theory of sets 
was captured by Myhill's formal system of Constructive Set Theory $\CST$ in~\cite{My75}, but it was not followed by Aczel in
his system of Constructive Zermelo-Fraenkel Set Theory $(\CZF)$ (see~\cite{AR10}). This similarity in the foundations
between $\CaT$ and $\MLTT$, or $\BST$, is in accordance with the extensive use of $\CaT$ in the semantics of type theories 
(e.g., see~\cite{LS86}).

A major feature, both of $\MLTT$, which originally was invented as a formal system for Bishop's book~\cite{Bi67}, and of $\BST$ 
is the use of dependent functions, or dependent assignment routines (see~\cite{Pe20}), as fundamental objects.
One may also say that even the notion of a type-family in $\MLTT$, or a set-indexed family of sets in $\BST$, is 
more or less fundamental. 
As it is noted by Palmgren in~\cite{Pa12a}, p.~35, in $\ZF$, and also in its constructive version $\CZF$, a 
family of sets is represented by the fibers of a function $\lambda \colon B \to I$, where the fibers 
$\lambda_i := \{ b \in B \mid \lambda(b) = i\}$ of $\lambda$, for every $i \in I$, represent the sets of the
family. Hence the notion of a family of sets is reduced to that of a set. As this reduction rests on the 
replacement scheme, such a reduction is not possible neither in $\MLTT$ nor in $\BST$.
We could say that the fundamental building-blocks of $\MLTT$ are the concepts 
\begin{center}
\textbf{types, functions, type-families, dependent functions,}
\end{center}
and that the building blocks of $\BST$ are the concepts
\begin{center}
\textbf{sets, functions, families of sets, dependent functions.}
\end{center}
The fundamental building blocks of $\CaT$ are the concepts
\begin{center}
\textbf{objects, arrows,}
\end{center}
and with the use of categorical notions, such as that of a \textbf{functor}, families of sets can be described as functors 
between categories. More abstract approaches to the notion of a family have been elaborated,
such as the notion of an
indexed-category (e.g., see~\cite{JP78}), where to every object of a category corresponds a certain category of families. 

The categorical interpretation of dependency has a long story (see e.g., the work of Cartmell~\cite{Ca78, Ca86}, 
Seely~\cite{Se84}, Ehrard~\cite{Eh88}, Curien~\cite{Cu93}, Dybjer~\cite{Dy96}, Jacobs~\cite{Ja99},
Hofmann~\cite{Ho97}, Pitts~\cite{Pi00}, and Palmgren~\cite{Pa12b}).
The dependent functions or the Pi-type (Pi-set) is translated categorically either as an object, or as a global section 
i.e., an arrow,
or even as an abstract family over an object, as in the framework of type-categories of Pitts~\cite{Pi00}. 
In the latter case, it depends on the
categorical interpretation of the Sigma-type.

Here we try to answer the following major question: \textit{What is the fundamental categorical generalisation of 
a family of sets and of a dependent function}? To answer this question, we incorporate appropriate, abstract 
formulations of the notions of a family of sets and of  a dependent function into the definition of a category.
Specifically, we introduce the notion of a \textit{category with family-arrows}, or a $\f$-\textit{category}, and the 
notion of a $\f$-\textit{category with dependent arrows}, or a $\di$-\textit{category}, so that the
fundamental concepts of the latter are
 \begin{center}
\textbf{objects, arrows, family-arrows, dependent arrows}.
\end{center}
The dependent-arrow-structure of such a category is axiomatised 
exactly as the arrow- and the family-structures.
Although dependency is captured categorically in many ways, here we propose to capture it as a primitive notion
and independently from the Sigma-type (set), exactly as it is the case in $\MLTT$ and $\BST$.

Adding a $\Sigma$-structure to a $\f$-category $\C C$, in a way compatible with the whole
$\f$-structure of $\C C$, results to the notion of an $(\f, \Sigma)$-category. $(\f, \Sigma)$-categories with
a terminal object are exactly the \textit{type-categories}, 
introduced by Pitts in~\cite{Pi00}, pp.~110-111,
following Cartmell's \textit{categories with attributes}, which were introduced in~\cite{Ca78}.
There are many examples of $(\f, \Sigma)$-categories without a terminal object (see Examples~\ref{ex: fconst}
and~\ref{ex: crings}). 
Pitts requires the existence of a terminal object, since his main example of a type-category in~\cite{Pi00} 
is that of the classifying category of 
a dependently typed algebraic theory, which has all finite products (see~\cite{Pi00}, p.~70). 
Notice that the notions of family-arrows and Sigma-objects are simultaneously given in the definition of 
a type-category by Pitts, while here are split. 
Here we also define $\di$-categories with Sigma-objects. As a $\di$-category has besides the arrow- and 
the family-structure a dependent-arrow-structure, the definition of a Sigma-object in it has to take into account 
the $\di$-arrow structure too.

We structure this paper as follows:
\vspace{-3mm}
\begin{itemize}
\item In section~\ref{sec: fcats} we present categories with family-arrows, and we give several examples of such categories.
To every object $a$ in such a category $\C C$ corresponds a collection of family-arrows $\fHom(a)$.

\item In section~\ref{sec: fscats} we present $\f$-categories with Sigma-objects, or $(\f, \Sigma)$-categories, together
with several examples of such categories.
Sigma-objects are abstract, categorical versions of the Sigma-types (sets) in $\MLTT$ $(\BST)$. We show that
in a $(\f, \Sigma)$-category $\C C$ with a terminal object $1$ we can recover the transport arrows
that witness the equality of 
the Sigma-objects over $1$ and the family-arrows $\lambda(i)$ and $\lambda(j)$, if $i, j$ are equal global 
elements of an object $a$ of $\C C$ (Proposition~\ref{prp: transp1}).

\item In section~\ref{sec: dcats} we introduce categories with dependent arrows, or $\di$-categories, together with many
examples of such categories. To every $a \in \C C$ and $\lambda \in \fHom(a)$ corresponds a collection $\dHom(a, \lambda)$ of 
\textit{dependent arrows} over $\lambda$. We show that a $(\f, \Sigma)$-category is a $\di$-category in a 
canonical way (Theorem~\ref{thm: typeisdi}).

\item In section~\ref{sec: dscats} we present $\di$-categories with sigma-objects, or $(\di, \Sigma)$-categories.
The second-projection-dependent arrow of a family-arrow $\lambda$ as an appropriate dependent
arrow is used, exactly as in $\MLTT$ $(\BST)$. We show that every $(\f, \Sigma)$-category is a $(\di, \Sigma)$-category
in a canonical way (Theorem~\ref{thm: typeisdsi}). Moreover, we show that if $\C C$ is a $(\di, \Sigma)$-category with 
a terminal object, then the two projection-arrows determine the corresponding Sigma-object (Proposition~\ref{prp: elsigma}).


\end{itemize}
\vspace{-3mm}


\noindent
For all notions and results from category theory that are used here without explanation or proof we
refer to~\cite{MM92, Aw10, Ri16}. 
$C_0$ denotes the objects of a category
$\C C$ and $C_1$ the arrows of $\C C$. $\Cat$ is the category of small categories.

\section{Categories with family-arrows}
\label{sec: fcats}

An arrow $f \colon a \to b$ in a category $\C C$ is the abstract, categorical version of a function $f \colon A \to B$,
and the standard categorical axioms for the composition of arrows are generalisations of the basic properties of
composition of functions.  
First, we extend the arrow-structure of a category with the abstract, categorical version of a family
of sets indexed by some set. For the sake of generality, an abstract family in $\C C$ over an object
$a \in \C C$ has no specific codomain, and it is composed in a coherent way with the arrows of $\C C$ with codomain $a$.

\begin{definition}\label{def: fcat}
A category $\C C$ is a category with family-arrows, or a $\f$-category, if\\[1mm]
\normalfont (i) 
\itshape For every object $a$ in $\C C$ there is a collection $\fHom(a)$, or $\fHom(a, \cdot)$, of
family-arrows. We denote the elements of $\fHom(a)$ by Greek letters $\lambda, \mu$, etc.  
If $\lambda \in \fHom(a)$,
we use a blue arrow starting from $a$, in order to picture $\lambda$
\vspace{-4mm}
\begin{center}
\begin{tikzpicture}

\node (E) at (0,0) {$a$};
\node[right=of E] (F) {.};

\draw[MyBlue,->] (E)--(F) node [midway,above] {$\lambda$};

\end{tikzpicture}
\end{center}
\vspace{-4mm}
$C_2 := \bigcup_{a \in C_0}\fHom(a)$ 
is the collection of 
family-arrows
of $\C C$.\\[1mm]
\normalfont (ii) 
\itshape For every $a, b \in \C C$ there is a composition-operation 
$\circ \colon \fHom(a) \times \Hom(b, a) \to \fHom(b)$, $(\lambda, f) \mapsto \lambda \circ f,$
such that 
the following compatibility conditions with the arrow-structure of $\C C$ hold:\\
$(\f_1)$ $ \ \lambda \circ 1_a = \lambda$
\vspace{-4mm}
\begin{center}
\begin{tikzpicture}

\node (E) at (0,0) {$a$};
\node[right=of E] (F) {$a$};
\node[right=of F] (A) {.};

\draw[->] (E)--(F) node [midway,above] {$1_a$};
\draw[MyBlue,->] (F)--(A) node [midway,above] {$\lambda$};
\draw[MyBlue,->,bend right=40] (E) to node [midway,below] {$\lambda$} (A) ;

\end{tikzpicture}
\end{center}
\vspace{-4mm}
$(\f_2)$ $ \ \lambda \circ (f \circ g) = (\lambda \circ f) \circ g$
\begin{center}
\resizebox{4cm}{!}{%
\begin{tikzpicture}

\node (E) at (0,0) {$c$};
\node[right=of E] (F) {$b$};
\node[right=of F] (A) {$a$};
\node[right=of A] (B) {.};

\draw[->] (E)--(F) node [midway,above] {$g$};
\draw[->] (F)--(A) node [midway,above] {$ \ f$};
\draw[MyBlue,->] (A)--(B) node [midway,above] {$\lambda$};
\draw[->,bend right] (E) to node [midway,below] {$f \circ g$} (A) ;
\draw[MyBlue,->,bend right=60] (E) to node [midway,below] {$\lambda \circ (f \circ g)$} (B) ;
\draw[MyBlue,->,bend left=40] (F) to node [midway,above] {$\lambda \circ f$} (B) ;
\draw[MyBlue,->,bend left=75] (E) to node [midway,above] {$(\lambda \circ f) \circ g$} (B) ;

\end{tikzpicture}
}
\end{center}
The family-structure of $\C C$ is called small, if $C_2$ is a set, and it is called locally small, 
if $\fHom(a)$ is a set, for every $a \in \C C$. If $C_2$ is a proper class, then we call $C_2$ large.
\end{definition}

\begin{example}[Families of sets and types]\label{ex: fsets}
 \normalfont Within the category of sets and functions $\Set$, if $I$ is a set, then a family of sets indexed by $I$ is a rule,
 or a functor, $\lambda \colon I \to \Set$, 
 in case $I$ is equipped with a trivial categorical structure (e.g., see~\cite{MRR88}, p.~18).
 Within the category $\BishSet$ of predicative sets in $\BST$, a family of sets over a set $I$ 
 is an appropriate non-dependent assignment routine
 $\lambda_0 \colon I \sto \D V_0$ (see~\cite{Pe20}). Within
 the category $\Type(\C U)$ of types in a universe $\C U$ of $\MLTT$,
 a family over a type $A \colon \C U$ is a term $P$ of type $A \to \C U$. The composition of family-arrows 
 with arrows is defined similarly 
 in the obvious way, in each case.
\end{example}

\begin{example}[Constant families]\label{ex: fconst}
 \normalfont A category $\C C$ is turned into a $\f$-category, if we define $\fHom(a) := C_0$, for every $a \in C_0$, and 
 $b \circ f = b$, for every $b \in C_0$ and $f \in \Hom(c, a)$
 \vspace{-3mm}
\begin{center}
\begin{tikzpicture}

\node (E) at (0,0) {$c$};
\node[right=of E] (F) {$a$};
\node[right=of F] (A) {.};

\draw[->] (E)--(F) node [midway,above] {$f$};
\draw[MyBlue,->] (F)--(A) node [midway,above] {$b$};
\draw[MyBlue,->,bend right=40] (E) to node [midway,below] {$b$} (A) ;

\end{tikzpicture}
\end{center}
\end{example}
\vspace{-3mm}

\begin{example}[The family-arrows in the coslice]\label{ex: fcoslice}
 \normalfont A category $\C C$ is turned into a $\f$-category, if we define $\fHom(a) := a/\C C$, for every $a \in C_0$, 
 where $a/\C C$ denotes the coslice of $\C C$ over $a$, and the composition is inherited from $\C C$.
\end{example}

\begin{example}[Families on categories]\label{ex: fcat}
\normalfont If $\C C$ is in $\Cat$, then we can define $\Fam(\C C) := 
\Fun(\C C^{\op}, \Set)$, the collection of all presheaves on $\C C$.
\end{example}

\begin{example}[Families in a topos (Pitts)]\label{ex: ftopos}
\normalfont If $\C C$ is a topos, 
with a subobject classifier $(\top, \Omega)$,
%
%
%
%
then, if $a \in \C C$, let a family on $a$ to be a pair $\lambda := (b, e)$, with $e \colon a \times b \to \Omega$ in $\C C$ 
i.e.,
$$\fHom(a) := \ext_{b \in C_0}\Hom(a \times b, \Omega).$$
If $g \colon c \to a$, let 
$(b, e) \circ g := \big(b, e \circ (g \times 1_b)\big)$ 
\vspace{-8mm}
\begin{center}
\resizebox{5cm}{!}{%
\begin{tikzpicture}

\node (E) at (0,0) {$a \ \ $};
\node[right=of E] (L) {};
\node[right=of L] (F) {$a \times \color{MyBlue} b$};
\node[right=of F] (N) {};
\node[right=of N] (A) {$ \ \ b$};
\node[above=of F] (K) {};
\node[above=of K] (B) {$c \times \color{MyBlue} b$};
\node[above=of L] (T) {$ \ c$};
\node[above=of N] (S) {$b  \ $};
\node[below=of F] (X) {$\Omega $};

\draw[->] (B)--(T) node [midway,left] {$\pr_c \ $};
\draw[->] (F)--(E) node [midway,below] {$\pr_a$};
\draw[->] (T)--(E) node [midway,left] {$g \ \ $};
\draw[->] (B)--(S) node [midway,right] {$ \ \pr_b$};
\draw[MyRed, ->,dashed] (B) to node [midway,left] {$g \times 1_b$} (F);
\draw[->] (F)--(A) node [midway,below] {$\pr_b$};
\draw[->] (S)--(A) node [midway,right] {$\ \ 1_b$};
\draw[MyBlue,->] (F)--(X) node [midway,left] {$e$};
\draw[MyBlue,->,bend left=175] (B) to node [midway,right] {$(b, e) \circ g$} (X) ;

\end{tikzpicture}
}
\end{center}
\end{example}
\vspace{-8mm}
In a weak version of a $\f$-category the ``strict'' conditions $(\f_1)$ and $(\f_2)$ hold up to
isomosphism. Next we give a fundamental example of a \textit{category with weak family-arrows}, or\footnote{There
is a debate over strict conditions vs weak conditions. Ehrhard advocates the weak concepts in~\cite{Eh88}, following
B\'enabou~\cite{Be85}, as more general approach and more categorical, since isomorphism is a ``more categorical'' 
concept than equality.
In~\cite{Pi00}, p.~113, Pitts defends the strict approach with respect to modeling dependent type theory. Our framework 
also indicates that $\f$-categories clearly correspond to categories and weak $\f$-categories correspond to 
weak categories.}
a $\wf$-\textit{category}.

\begin{example}[The weak family-arrows in the slice]\label{ex: fslice}
 \normalfont A category $\C C$ with pullbacks is turned into a $\wf$-category, if we define $\wfHom(a) := \C C/a$, for 
 every $a \in C_0$, 
 where $\C C/a$ denotes the slice of $\C C$ over $a$. If $\fba$ and $\lambda \colon c \to a$, the composition $\lambda \circ f$
 in $\C C/b$ is defined as the arrow $\lambda \circ f \colon b \times_a c \to b$ in the following 
 pullback\footnote{It is because of this example that Pitts in~\cite{Pi00}, pp.~110-111, calls the operation
 $\lambda \circ f$ in a type-category ``the pullback of  $\lambda$ along $a$''. See also his discussion on p.~113 on
 the use of this weak family-structure for the interpretation of dependent types in toposes, based on their 
 locally cartesian closed structure and the paradigm of Seely~\cite{Se84}.} 
  \begin{center}
\begin{tikzpicture}

\node (E) at (0,0) {$ b \ $};
\node[right=of E] (F) {$ \ \   a$.};
\node[above=of F] (A) {$c$};
\node [left=of A] (D) {$  \mathsmaller{b \times_a c} $};

\draw[->] (E) to node [midway,below] {$f$} (F) ;
\draw[->] (D) to node [midway,above] {$f{'}$} (A) ;
\draw[MyBlue,->] (D)--(E) node [midway,left] {$\lambda \circ f$};
\draw[MyBlue,->] (A)--(F) node [midway,right] {$\lambda$};

\end{tikzpicture}
\end{center}
As the following square is a pullback
\begin{center}
\begin{tikzpicture}

\node (E) at (0,0) {$ b $};
\node[right=of E] (F) {$a$};
\node[above=of F] (A) {$a$};
\node [left=of A] (D) {$b $};

\draw[->] (E) to node [midway,below] {$f$} (F) ;
\draw[->] (D) to node [midway,above] {$f$} (A) ;
\draw[->] (D)--(E) node [midway,left] {$1_b$};
\draw[->] (A)--(F) node [midway,right] {$1_a$};

\end{tikzpicture}
\end{center}
the arrows $\lambda \circ 1_a \colon b \times_a a \to b$ and $1_b$ are isomorphic in $\C C/b$.
The weak version of $(\f_2)$ is explained similarly.
\end{example}

If $\C C$ and $\C D$ are $\f$-categories, a $\f$-\textit{functor} $F \colon \C C \to \C D$ is a rule $F = (F_0, F_1, F_2)$, where 
$(F_0, F_1)$ is a functor and $F_2(\lambda) \in \fHom(F(a))$, for every $a \in \C C$ and 
$\lambda \in \fHom(a)$, and
$F_2(\lambda \circ f) = F_2(\lambda) \circ F_1(f)$, where $f \in \Hom(b, a)$ 
\begin{center}
\begin{tikzpicture}

\node (E) at (0,0) {$F(b)$};
\node[right=of E] (F) {$F(a)$};
\node[right=of F] (A) {.};

\draw[->] (E)--(F) node [midway,above] {$F(f)$};
\draw[MyBlue,->] (F)--(A) node [midway,above] {$F(\lambda)$};
\draw[MyBlue,->,bend right=40] (E) to node [midway,below] {$F(\lambda \circ f)$} (A) ;

\end{tikzpicture}
\end{center}
If $F, G \colon \C C \to \C D$ are $\f$-functors, a $\f$-\textit{natural transformation}
$\eta \colon F \To G$ is a natural transformation, such that, for every $a \in \C C$ and 
$\lambda \in \fHom(a)$, 
the following
triangle commutes
\begin{center}
\begin{tikzpicture}

\node (E) at (0,0) {$F(a)$};
\node[right=of E] (F) {};
\node[right=of F] (A) {$G(a)$};
\node[below=of F] (B) {.};

\draw[MyBlue,->] (E)--(B) node [midway,left] {$F(\lambda) \ $};
\draw[->] (E)--(A) node [midway,above] {$\eta_a$};
\draw[MyBlue,->] (A) to node [midway,right] {$\ G(\lambda)$} (B) ;

\end{tikzpicture}
\end{center}
$\f$-functors and $\f$-natural transformations are closed under composition, and the 
constant $\f$-functor is defined in the expected way. Many standard constructions, such as 
the product of $\f$-categories with projections as $\f$-functors, the slice $\f$-category and the 
coslice $\f$-category, are straightforward to develop.

Next we define the $\fHom$-functor for a $\f$-category $\C C$ with a locally small $\f$-structure. This presheaf
behaves similarly to the standard functor $\Hom(-, a)$ in a category. Using the corresponding category of elements, 
the category $\fHom(\C C)$ of family-arrows of $\C C$ is defined.

\begin{definition}\label{def: fcat}
If $\C C$ is a $\f$-category with a locally small $\f$-structure, let
$\fHom \colon \C C^{\op} \to \Set$, with $a \mapsto \fHom(a),$ and
$(f \colon b \to a) \mapsto \fHom(f) \colon \fHom(a) \to \fHom(b)$, where
$[\fHom(f)](\lambda) := \lambda \circ f.$
The category of family-arrows $\fHom(\C C)$, or $\C C_2$, of $\C C$ is the category\footnote{Here we follow
Palmgren's notation of the Grothendieck category found in~\cite{Pa16}. The connection of the Grothendieck
construction to the $\Sigma$-type of $\MLTT$ fully justifies Palmgren's notation.}
$\Sigma(\C C, \fHom)$ of elements of $\C C$ over the presheaf $\fHom$
i.e., $\fHom(\C C)$ has objects pairs $(a, \lambda)$ with $a \in C_0$ and $\lambda \in \fHom(a)$. An arrow $f \colon (b, \mu) \to 
(a, \lambda)$ is an arrow $f \colon b \to a$ in $C_1$ such that 
$\mu = [\fHom(f)](\lambda) := \lambda \circ f$
\begin{center}
\begin{tikzpicture}

\node (E) at (0,0) {$b$};
\node[right=of E] (F) {};
\node[right=of F] (A) {$a$};
\node[below=of F] (B) {.};

\draw[MyBlue,->] (E)--(B) node [midway,left] {$\mu \ $};
\draw[->] (E)--(A) node [midway,above] {$f$};
\draw[MyBlue,->] (A) to node [midway,right] {$\ \lambda$} (B) ;

\end{tikzpicture}
\end{center}
The composition of $f \colon (b, \mu) \to (a, \lambda)$ and $g \colon (c, \nu) \to (b, \mu)$ is $f \circ g$ and $1_{(a, \lambda)} :=
1_a$. 
\end{definition}

%
%

\section{Categories with family-arrows and Sigma-objects}
\label{sec: fscats}

Next we assign to each object $a$ of a $\f$-category $\C C$ and to each
$\lambda \in \fHom(a)$ a Sigma-object $\sum_a \lambda$ and its first-projection-arrow
$\pr_1^{a, \lambda} \colon \sum_a \lambda \to a$ in
$\C C$.

\begin{definition}\label{def: fscat}
A $\f$-category $\C C$ has Sigma-objects, or is a $(\f, \Sigma)$-category, if\\[1mm]
\normalfont (i) 
\itshape For every $a \in \C C$ there are operations
$$\sum_a \colon \fHom(a) \to C_0, \ \ \ \ \sum_a(\lambda) := \sum_a \lambda \in C_0,$$
$$\pr_1^a \colon \fHom(a) \to C_1, \ \ \ \ \pr_1^{a}(\lambda) := \pr_1^{a, \lambda} \colon \sum_a \lambda \to a,$$
where $\sum_a \lambda$ is the Sigma-object of $\lambda$, and $\pr_1^{a, \lambda}$ is the first-projection-arrow
in $\C C$ associated to the Sigma-object of $\lambda$.\\[1mm]
\normalfont (ii) 
\itshape For every $b \in \C C$ and $f \in \Hom(b, a)$ there is an operation
$$\Sigma f \colon \fHom(a) \to C_1, \ \ \ \ \lambda \mapsto (\Sigma f)(\lambda) 
=: \Sigma_{\lambda}f$$
in $\Hom\big(\sum_b (\lambda \circ f), \sum_a \lambda\big),$
such that the following square 
\begin{center}
\begin{tikzpicture}

\node (E) at (0,0) {$ b $};
\node[right=of E] (K) {};
\node[right=of K] (F) {$a$};
\node[above=of F] (A) {$ \ \  \sum_a \lambda$};
\node [left=of A] (D) {$\sum_b (\lambda \circ f) \   $};

\draw[->] (E) to node [midway,below] {$f$} (F) ;
\draw[->] (D) to node [midway,above] {$\Sigma_{\lambda}f$} (A) ;
\draw[->] (D)--(E) node [midway,left] {$\pr_1^{b, \lambda \circ f}$};
\draw[->] (A)--(F) node [midway,right] {$\pr_1^{a, \lambda}$};

\end{tikzpicture}
\end{center}
is a pullback, and the following strictness-conditions hold:\\[1mm]
$(\si_1)$ $\ \Sigma_{\lambda}1_a = 1_{\mathsmaller{ \sum_a \lambda}}$.\\[1mm]
$(\si_2)$ $\ \Sigma_{\lambda}(f \circ g) = \big(\Sigma_{\lambda}f\big) \circ \Sigma_{(\lambda \circ f)}g$, for every $f \in \Hom(b, a)$ 
and $g \in \Hom(c, b)$.
\end{definition}

Using conditions $(\f_1)$ and $(\f_2)$, conditions $(\si_1)$ and $(\si_2)$ are well-defined,
as the following rectangle is trivially a pullback  
\begin{center}
\resizebox{5cm}{!}{%
\begin{tikzpicture}

\node (E) at (0,0) {$ a $};
\node[right=of E] (K) {};
\node[right=of K] (F) {$a$};
\node[above=of F] (A) {$ \ \  \sum_a \lambda$};
\node [left=of A] (D) {$\sum_a (\lambda \circ 1_a)   \ $};

\draw[->] (E) to node [midway,below] {$1_a$} (F) ;
\draw[->] (D) to node [midway,above] {$\Sigma_{\lambda}1_a$} (A) ;
\draw[->] (D)--(E) node [midway,left] {$\pr_1^{a, \lambda \circ 1_a}$};
\draw[->] (A)--(F) node [midway,right] {$\pr_1^{a, \lambda}$};

\end{tikzpicture}
}
\end{center}
and by the pullback lemma the following outer rectangle is also a pullback 
\begin{center}
\resizebox{8cm}{!}{%
\begin{tikzpicture}

\node (E) at (0,0) {$\sum_c (\lambda \circ f) \circ g$};
\node[right=of E] (H) {};
\node[right=of H] (F) {$\sum_b (\lambda \circ f) $};
\node[below=of E] (A) {$c$};
\node[below=of F] (B) {$b$};
\node[right=of F] (K) {};
\node[right=of K] (G) {$\sum_a \lambda$};
\node [below=of G] (C) {$a$.};

\draw[->] (E)--(F) node [midway,above] {$ \mathsmaller{\Sigma_{(\lambda \circ f)}g}$};
\draw[->] (F)--(G) node [midway,above] {$ \mathsmaller{\Sigma_{\lambda}f}$};
\draw[->] (B)--(C) node [midway,below] {$ f $};
\draw[->] (A)--(B) node [midway,below] {$ \ \ g$};
\draw[->] (E)--(A) node [midway,left] {$\pr_1^{c, (\lambda \circ f) \circ g}$};
\draw[->] (F) to node [midway,left] {$\pr_1^{b, \lambda \circ f}$} (B);
\draw[->] (G)--(C) node [midway,right] {$\pr_1^{a, \lambda}$};
\draw[->,bend left] (E) to node [midway,above] {$\Sigma_{\lambda}(f \circ g)$} (G) ;

\end{tikzpicture}
}
\end{center}

\begin{example}[Trivial Sigma-object]\label{ex: strivial}
 \normalfont Every $\f$-category $\C C$  is turned into a $(\f, \Sigma)$-category. If $\lambda \in \fHom(a)$, let
 $\sum_a \lambda := a$, $\pr_1^{a, \lambda} := 1_a$, and $\Sigma_{\lambda}f = f$. The square 
 \begin{center}
\begin{tikzpicture}

\node (E) at (0,0) {$ b $};
\node[right=of E] (F) {$a$};
\node[above=of F] (A) {$a$};
\node [left=of A] (D) {$b $};

\draw[->] (E) to node [midway,below] {$f$} (F) ;
\draw[->] (D) to node [midway,above] {$f$} (A) ;
\draw[->] (D)--(E) node [midway,left] {$1_b$};
\draw[->] (A)--(F) node [midway,right] {$1_a$};

\end{tikzpicture}
\end{center}
is a pullback and conditions $(\si_1), (\si_2)$ are trivially satisfied.
\end{example}

\begin{example}[Sigma-set and Sigma-type]\label{ex: ssets}
 \normalfont In $\Set$ the Sigma-set of $I$ and $\lambda \colon I \to \Set$ is the exterior union 
 $$\ext_{i \in I}\lambda(i) := \bigg\{(i, x) \in I \times \bigcup_{i \in I} \lambda(i) \mid x \in \lambda(i)\bigg\},$$
 equipped with the
 corresponding projection-function to $I$.
 In the category $\BishSet$ of sets in $\BST$ the Sigma-set is the exterior union
 $\sum_{i \in I}\lambda_0 (i)$, the membership of which and its equality are defined by
\[ w \in \sum_{i \in I}\lambda_0 (i) : 
\TOT \exists_{i \in I}\exists_{x \in \lambda_0 (i)}\big(w := (i, x)\big), \]
\[ (i, x) =_{\mathsmaller{\sum_{i \in I}\lambda_0 (i)}} (j, y) : \TOT i =_I j \ \& \ \lambda_{ij} (x) 
=_{\lambda_0 (j)} y, \]
where $\lambda_{ij} \colon \lambda_0(i) \to \lambda_0(j)$ and 
$\lambda_{ji} \colon \lambda_0(j) \to \lambda_0(i)$ are the \textit{transport maps} that witness the equality 
of $\lambda_0(i)$ and $\lambda_0(j)$ in $\D V_0$ (see~\cite{Pe20}, p.~37). 
The assignment routine $\pr_1^{I, \lambda} \colon \sum_{i \in I}\lambda_0 (i) \sto I$, 
where $\pr_1^{I, \lambda} (i, x) := i$, for every $(i, x) \in \sum_{i \in I}\lambda_0 (i)$, is a function.
If $f \colon J \to I$, let 
$$\Sigma_{\lambda}f \colon \sum_{j \in J}\lambda_0(f(j)) \sto \sum_{i \in I}\lambda_0(i), \ \ \ \ (j, x) \mapsto (f(j), x).$$
Then $\Sigma_{\lambda}f$ is a function, the corresponding square is a pullback,
and conditions $(\si_1), (\si_2)$ are satisfied. For the category $\Type(\C U)$ we work similarly.
\end{example}

\begin{example}[Sigma-object of a constant family]\label{ex: sconst}
 \normalfont If $\C C$ has binary products, then it is turned into a $(\f, \Sigma)$-category as follows: if 
 $b \in \fHom(a)$, as in Example~\ref{ex: fconst}, we define
 $$\sum_a b := a \times b \ \ \& \ \ \pr_1^{a,b} := \pr_a \colon a \times b \to a.$$
 If $f \in \Hom(c, a)$, and if 
 $\Sigma_b f := \langle f \circ \pr_c, \pr_b\rangle =: f \times 1_b,$
%
%
%
then the following rectangle is a pullback
 \begin{center}
 \resizebox{6cm}{!}{%
\begin{tikzpicture}

\node (E) at (0,0) {$ c \ \ $};
\node[right=of E] (K) {};
\node[right=of K] (F) {$ \ \ a$};
\node[above=of F] (A) {$ \mathsmaller{a \times b}$};
\node [left=of A] (S) {};
\node[left=of S] (D) {$  \mathsmaller{c \times b}  $};
\node[left=of D] (G) {};
\node[above=of G] (H) {$d$};

\draw[->] (E) to node [midway,below] {$f$} (F) ;
\draw[->] (D) to node [midway,below] {$f \times 1_b$} (A) ;
\draw[->] (D)--(E) node [midway,left] {$\pr_b$};
\draw[->] (A)--(F) node [midway,right] {$\pr_a$};
\draw[->, bend right=50] (H) to node [midway,left] {$q$} (E);
\draw[->, bend left=30] (H) to node [midway,above] {$ \  p$} (A);
\draw[MyRed, ->,dashed] (H) to node [midway,right] {$\langle q, \pr_b \circ p\rangle$} (D) ;

\end{tikzpicture}
}
\end{center}
and conditions $(\si_1), (\si_2)$ are satisfied.
\end{example}

If $\C C$ has binary products, then working as in the previous example, we can define Sigma-objects 
over elements of $\C C$ and their coslices (see Example~\ref{ex: fcoslice}).
Clearly, to the family-structure in Example~\ref{ex: fcat} corresponds the Grothendieck construction.
To the family-structure of a topos in Example~\ref{ex: ftopos} Pitts corresponds in~\cite{Pi00}, p.~113,
a canonical construction of Sigma-objects. 
A weak version of Sigma-objects is defined in analogy to a category with weak family-arrows in Example~\ref{ex: fslice}.

\begin{example}[The weak Sigma-objects in the slice]\label{ex: fslice}
 \normalfont If $\C C$ is a category with pullbacks and $\wfHom(a) := \C C/a$, as in Example~\ref{ex: fslice},
 we define for every $\lambda \colon c \to a \in \C C/a$ the Sigma-object $\sum_a \lambda := c$,
 $\pr_1^{a, \lambda} := \lambda$, and if $\fba$, let $\Sigma_{\lambda} f := f{'}$, 
 which by definition is a pullback
 \begin{center}
\begin{tikzpicture}

\node (E) at (0,0) {$ b \  $};
\node[right=of E] (F) {$ \   a$.};
\node[above=of F] (A) {$c$};
\node [left=of A] (D) {$ \mathsmaller{b \times_a c} $};

\draw[->] (E) to node [midway,below] {$f$} (F) ;
\draw[->] (D) to node [midway,above] {$f{'}$} (A) ;
\draw[->] (D)--(E) node [midway,left] {$\lambda \circ f$};
\draw[->] (A)--(F) node [midway,right] {$\lambda$};

\end{tikzpicture}
\end{center}
The strictness conditions $(\si_1), (\si_2)$ are not satisfied, as the conditions $(\f_1), (\f_2)$ are not satisfied.
\end{example}

As we show next, there is a plethora of non-trivial $(\f, \Sigma)$-categories without a terminal object.

\begin{example}[Commutative rings]\label{ex: crings}
 \normalfont
If $(R, +, 0, \cdot, 1)$ is a commutative ring, and if $\C C(R, +, 0)$ is the category of its additive, group-structure
with objects a
singleton $\{\ast\}$ and arrows the elements of $R$, it is straightforward to show that every commutative square
 \begin{center}
\begin{tikzpicture}

\node (E) at (0,0) {$\ast $};
\node[right=of E] (F) {$\ast$};
\node[above=of F] (A) {$\ast$};
\node [left=of A] (D) {$\ast$};

\draw[->] (E) to node [midway,below] {$c$} (F) ;
\draw[->] (D) to node [midway,above] {$a$} (A) ;
\draw[->] (D)--(E) node [midway,left] {$d$};
\draw[->] (A)--(F) node [midway,right] {$b$};

\end{tikzpicture}
\end{center}
is a pullback. If $\Fam(\ast) := R \times R$ and $(a, b) \circ c := (c + a, c +b)$, for every $a, b, c \in R$, we equip 
$\C C(R, +, 0)$ with a family-arrow-structure. If we define $\sum_{\ast}(a, b) := \ast$, $\pr_1^{\ast, (a, b)} := a \cdot b$,
and $\Sigma_{(a,b)}c := c(1+c+b+a)$, 
 \begin{center}
\begin{tikzpicture}

\node (E) at (0,0) {$\ast $};
\node[right=of E] (K) {};
\node[right=of K] (F) {$\ast$};
\node[above=of F] (A) {$\ast$};
\node [above=of E] (D) {$\ast$};

\draw[->] (E) to node [midway,below] {$c$} (F) ;
\draw[->] (D) to node [midway,above] {$c(1+c+b+a)$} (A) ;
\draw[->] (D)--(E) node [midway,left] {$(c+a)\cdot(c+b)$};
\draw[->] (A)--(F) node [midway,right] {$a \cdot b$};

\end{tikzpicture}
\end{center}
we turn $\C C(R, +, 0)$ into a $(\f, \Sigma)$-category, which, in general, has no terminal object.
\end{example}

A notion of a $(\f, \Sigma)$-\textit{functor} can be defined in the expected way, and one can show that a ring
homomorphism between two
commutative rings induces a $(\f, \Sigma)$-functor between the corresponding $(\f, \Sigma)$-categories.

If $\C C$ is a $(\f, \Sigma)$-category with a terminal object $1$, we can recover within $\C C$ 
the transport maps $\lambda_{ij} \colon \lambda_0(i) \to \lambda_0(j)$, where $i =_I j$, from the definition of
an $I$-family of sets in $\BST$ (see also Example~\ref{ex: ssets}). Clearly, $\lambda_0(i)$ and $\sum_{k \in 1}\mu_0(k)$,
where $\mu_0(0) := \lambda_0(i)$, are equal in $\D V_0$.

\begin{proposition}\label{prp: transp1} If $\C C$ is a $(\f, \Sigma)$-category with a terminal object $1$, 
$a \in \C C$ and $i, j \in a$, the following hold:
\begin{center}
\resizebox{3cm}{!}{%
\begin{tikzpicture}

\node (E) at (0,0) {$1$};
\node[right=of E] (F) {$a$};
\node[right=of F] (A) {.};

\draw[>->,bend right=25] (E) to node [midway,below] {$i$} (F);
\draw[MyBlue,->,bend right=65] (E) to node [midway,below] {$\lambda(i)$} (A) ;
\draw[>->,bend left=25] (E) to node [midway,above] {$j$} (F);
\draw[MyBlue,->,bend left=65] (E) to node [midway,above] {$\lambda(j)$} (A) ;
\draw[MyBlue,->] (F) to node [midway,above] {$\lambda$} (A) ;

\end{tikzpicture}
}
\end{center}
\normalfont (i) 
\itshape $\sum_1 \lambda(i)$ is a subobject of $\sum_a \lambda$, and $\pr_1^{1, \lambda(i)} =!$, the unique arrow from 
$\sum_1 \lambda(i) \to 1$
\begin{center}
\resizebox{5cm}{!}{%
\begin{tikzpicture}

\node (E) at (0,0) {$\sum_1 \lambda(i) $};
\node[right=of E] (K) {};
\node[right=of K] (F) {$\sum_a \lambda$};
\node[below=of E] (A) {$1 $};
\node [below=of F] (D) {$a$.};

\draw[>->] (E) to node [midway,above] {$\Sigma_{\lambda}i$} (F) ;
\draw[>->] (A) to node [midway,below] {$i$} (D) ;
\draw[->] (E)--(A) node [midway,left] {$!$};
\draw[->] (F)--(D) node [midway,right] {$\pr_1^{a, \lambda}$};

\end{tikzpicture}
}
\end{center}
\normalfont (ii) 
\itshape If $i = j$, there are transport arrows $\lambda_{ij} \colon \sum_1 \lambda(i) \to \sum_1 \lambda(j)$ and 
$\lambda_{ji} \colon \sum_1 \lambda(j) \to \sum_1 \lambda(i)$, which form an iso.
\end{proposition}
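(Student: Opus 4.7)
The plan is to reduce both parts to standard facts about monomorphisms together with the functional character of the composition operation $\circ \colon \fHom(a) \times \Hom(b, a) \to \fHom(b)$ from the definition of a $\f$-category.

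For part (i), the natural candidate for the claimed monomorphism $\sum_1 \lambda(i) \hookrightarrow \sum_a \lambda$ is the arrow $\Sigma_{\lambda} i$ supplied by the pullback clause in the definition of a $(\f, \Sigma)$-category, instantiated at $b = 1$ and $f = i$. I would first observe that $i \colon 1 \to a$ is itself a monomorphism: for any pair of arrows $h, k \colon X \to 1$, one has $h = k$ automatically by the universal property of the terminal object, so the hypothesis $i \circ h = i \circ k$ is already satisfied. Since the defining square with top $\Sigma_{\lambda} i$, bottom $i$, left $\pr_1^{1, \lambda(i)}$, and right $\pr_1^{a, \lambda}$ is a pullback, and since monomorphisms are stable under pullback, $\Sigma_{\lambda} i$ is a monomorphism. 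The equality $\pr_1^{1, \lambda(i)} = {!}$ is then immediate: the arrow $\sum_1 \lambda(i) \to 1$ is unique by the terminal-object property, so it must coincide with $!$.

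For part (ii), the key observation is that $\lambda \circ (-) \colon \Hom(1, a) \to \fHom(1)$ is a genuine operation, hence a function of its argument, so the hypothesis $i = j$ forces $\lambda(i) := \lambda \circ i = \lambda \circ j =: \lambda(j)$ as family-arrows over $1$. Consequently $\sum_1 \lambda(i)$ and $\sum_1 \lambda(j)$ are literally the same object of $\C C$, and I would simply set $\lambda_{ij} := 1_{\sum_1 \lambda(i)} = 1_{\sum_1 \lambda(j)} =: \lambda_{ji}$, which is trivially an isomorphism by the identity axioms for composition.

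There is no genuine technical obstacle in this proposition; the content is conceptual rather than computational. In $\BST$ the transport maps $\lambda_{ij}, \lambda_{ji}$ are in general non-trivial witnesses of the equality $\lambda_0(i) =_{\D V_0} \lambda_0(j)$, but in the strict $(\f, \Sigma)$-framework, where $\circ$ is an operation and equality of objects is literal, the transports collapse to identities. This aligns with the strict approach to family-structures advocated by Pitts and mentioned in the discussion of weak family-arrows preceding this section.
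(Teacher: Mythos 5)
Your part (i) is essentially the paper's argument: $i$ is monic because its domain is terminal, monomorphisms are stable under pullback, so $\Sigma_{\lambda}i$ is monic, and terminality forces $\pr_1^{1, \lambda(i)} = {!}$. For part (ii), however, you take a genuinely different route. The paper never uses that $\lambda(i)$ and $\lambda(j)$ are literally equal; it feeds the cone $\big(\Sigma_{\lambda}j, {!!}\big)$ into the pullback square defining $\Sigma_{\lambda}i$ --- the outer diagram commutes because $\pr_1^{a, \lambda} \circ \Sigma_{\lambda}j = j \circ {!!} = i \circ {!!}$ --- obtaining $\lambda_{ji}$ as the unique mediating arrow, dually for $\lambda_{ij}$, and then proves $\lambda_{ji} \circ \lambda_{ij} = 1$ and $\lambda_{ij} \circ \lambda_{ji} = 1$ by cancelling the monos $\Sigma_{\lambda}i$ and $\Sigma_{\lambda}j$. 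Your observation that the strictness of the operations $\circ$, $\sum_1$ and $\Sigma(-)$ forces $\lambda(i) = \lambda(j)$, $\sum_1 \lambda(i) = \sum_1 \lambda(j)$, $\Sigma_{\lambda}i = \Sigma_{\lambda}j$ and ${!} = {!!}$ is correct, and it in fact shows more: the paper's pullback-determined transports are themselves identities, since $\Sigma_{\lambda}i \circ \lambda_{ji} = \Sigma_{\lambda}j = \Sigma_{\lambda}i$ and $\Sigma_{\lambda}i$ is monic. So the two constructions yield the same arrows, and your identities do satisfy the defining cone equations that Proposition~\ref{prp: elsigma} later relies on --- a point worth making explicit, since that proposition refers to \emph{the} transport arrow determined by the pullback rather than to an arbitrary iso. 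What the paper's route buys is that it only uses commutativity of the outer cone, so it is the form of the argument that survives in weak $\f$-categories or when equality of global elements is witnessed rather than literal (the $\BST$ motivation behind the terminology); what yours buys is the sharper, and slightly deflationary, observation that in the strict setting the transports collapse to identities.
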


\begin{proof}
 (i) It follows from the basic property of pullbacks, as $i$ is a mono, and hence so is $\Sigma_{\lambda}i$.\\
 (ii) As the following square is a pullback, and as the outer diagram commutes, exactly for the same reason, there is unique 
 arrow $\lambda_{ji}$, such that $\Sigma_{\lambda}i \circ \lambda_{ji} = \Sigma_{\lambda}j$ and $! \circ \lambda_{ji} = !!$.
 \begin{center}
 \resizebox{6cm}{!}{%
\begin{tikzpicture}

\node (E) at (0,0) {$\sum_1 \lambda(i) $};
\node[right=of E] (K) {};
\node[right=of K] (F) {$\sum_a \lambda$};
\node[below=of E] (A) {$1 $};
\node [below=of F] (D) {$a$.};
\node[left=of E] (G) {};
\node[above=of G] (H) {$\sum_1 \lambda(j)$};

\draw[>->] (E) to node [midway,above] {$\Sigma_{\lambda}i$} (F) ;
\draw[>->] (A) to node [midway,below] {$i$} (D) ;
\draw[>->] (A) to node [midway,above] {$j$} (D) ;
\draw[->] (E)--(A) node [midway,left] {$!$};
\draw[->] (F)--(D) node [midway,right] {$\pr_1^{a, \lambda}$};
\draw[->, bend right=50] (H) to node [midway,left] {$!!$} (A);
\draw[->, bend left=30] (H) to node [midway,above] {$ \  \Sigma_{\lambda}j$} (F);
\draw[MyRed, ->,dashed] (H) to node [midway,above] {$ \  \lambda_{ji}$} (E) ;
\draw[->] (E)--(A) node [midway,left] {$!$};

\end{tikzpicture}
}
\end{center}
The following dual pullback determines the arrow $\lambda_{ij}$, satisfying 
$\Sigma_{\lambda}j \circ \lambda_{ij} = \Sigma_{\lambda}i$ and $!! \circ \lambda_{ij} = !$.
 
 \begin{center}
 \resizebox{6cm}{!}{%
\begin{tikzpicture}

\node (E) at (0,0) {$\sum_1 \lambda(j) $};
\node[right=of E] (K) {};
\node[right=of K] (F) {$\sum_a \lambda$};
\node[below=of E] (A) {$1 $};
\node [below=of F] (D) {$a$.};
\node[left=of E] (G) {};
\node[above=of G] (H) {$\sum_1 \lambda(i)$};

\draw[>->] (E) to node [midway,above] {$\Sigma_{\lambda}j$} (F) ;
\draw[>->] (A) to node [midway,below] {$j$} (D) ;
\draw[>->] (A) to node [midway,above] {$i$} (D) ;
\draw[->] (E)--(A) node [midway,left] {$!$};
\draw[->] (F)--(D) node [midway,right] {$\pr_1^{a, \lambda}$};
\draw[->, bend right=50] (H) to node [midway,left] {$!$} (A);
\draw[->, bend left=30] (H) to node [midway,above] {$ \  \Sigma_{\lambda}i$} (F);
\draw[MyRed, ->,dashed] (H) to node [midway,above] {$ \  \lambda_{ij}$} (E) ;
\draw[->] (E)--(A) node [midway,left] {$!!$};

\end{tikzpicture}
}
\end{center}
By the equalities
$\Sigma_{\lambda}i \circ 1_{\sum_1 \lambda(i)} = \Sigma_{\lambda}i
 = \Sigma_{\lambda}j \circ \lambda_{ij} 
= \big(\Sigma_{\lambda}i \circ \lambda_{ji}\big)  \circ \lambda_{ij} 
 = \Sigma_{\lambda}i \circ \big(\lambda_{ji}  \circ \lambda_{ij}\big)$
and since $\Sigma_{\lambda}i$ is a mono, we get $1_{\sum_1 \lambda(i)} = \lambda_{ji}  \circ \lambda_{ij}$.
Working similarly, we get $1_{\sum_1 \lambda(j)} = \lambda_{ij}  \circ \lambda_{ji}$.
\end{proof}

\section{Categories  with dependent arrows}
\label{sec: dcats}

Next, we extend the arrow-structure and the family-structure of a $\f$-category with the abstract, categorical version 
of a dependent function. To every $a \in \C C$ and $\lambda \in \fHom(a)$ corresponds a collection $\dHom(a, \lambda)$ of 
\textit{dependent arrows} over $a$ and $\lambda$. In the presence of dependent arrows in a $\di$-category $\C C$, 
its standard arrows can also be 
called the \textit{non-dependent arrows} of $\C C$. The axioms of a $\di$-category ensure the compatibility of 
the dependent-arrow structure with the (non-dependent) arrow- and family-structure of the given $\f$-category.

\begin{definition}\label{def: dcat}
A $\f$-category $\C C$ has dependent arrows, or is a $\di$-category, if\\[1mm]
\normalfont (i) 
\itshape For every object $a$ in $\C C$ and $\lambda \in \fHom(a)$ there is a collection $\dHom(a, \lambda)$
of dependent arrows over $(a, \lambda)$. We denote the elements of $\dHom(a, \lambda)$ by capital Greek
letters $\Phi, \Psi$, etc.
Let $C_3 := \bigcup_{a \in C_0, \lambda \in \fHom(a)}\dHom(a, \lambda)$ be the collection of all dependent-arrows of
$\C C$.\\[1mm]
\normalfont (ii) 
\itshape For every $\Phi \in \dHom(a, \lambda)$ and every $f \in \Hom(b, a)$ there is a dependent arrow 
$\Phi(f) \in \dHom(b, \lambda \circ f)$, which we call the application\footnote{One could call it the composition
$\Phi \circ f$, instead of the application of $\Phi$ to  $f$, but there are examples of $\di$-categories in
which there is already a notion of composition between the dependent arrows and the non-dependent ones e.g.,
see the global sections or dependent objects in Theorem~\ref{thm: typeisdi}.} of $\Phi$ to $f$, such that 
the following compatibility conditions with the $\f$-structure of $\C C$ hold:\\
$(\di_1)$ $\ \Phi(1_a) = \Phi$.\\[1mm]
$(\di_2)$ $\ \Phi(f \circ g) = [\Phi(f)](g)$, where $f \in \Hom(b, a)$ and $g \in \Hom(c, b)$.\\[1mm]
We call dependent-arrow-structure of $\C C$ small, if $C_3$ is a set, and locally small, 
if $\dHom(a, \lambda)$ is a set, for every $a \in C_0$ and $\lambda \in \fHom(a)$.
If $C_3$ is a proper class, we call $C_3$ large.
\end{definition}

Using conditions $(\f_1)$ and $(\f_2)$ we have that $(\di_1)$ and $(\di_2)$ are well-defined,
as $\Phi(1_a) \in \fHom(a, \lambda \circ 1_a)$, $\Phi(f \circ g) \in \dHom(c, \lambda \circ (f \circ g))$,
and $[\Phi(f)](g) \in \dHom(c, (\lambda \circ f) \circ g)$.
The notion of a dependent arrow is a categorical generalisation of the notion of
dependent function in $\MLTT$ or $\BST$, exactly as the notion of arrow is the categorical generalisation of the notion 
of function. The most fundamental feature of a dependent function $\Phi$ with respect to a family $(\lambda(i))_{i \in I}$ 
of types (sets) over a type (set) $I$ is that if $i \colon I$ $(i \in I)$, then $\Phi(i) \colon \lambda(i)$
$(\Phi(i) \in \lambda(i))$. If $\C C$ is a $\di$-category 
with a terminal object $1$, then 
\begin{center}
\begin{tikzpicture}

\node (E) at (0,0) {$1$};
\node[right=of E] (F) {$a$};
\node[right=of F] (A) {};

\draw[>->] (E)--(F) node [midway,above] {$i$};
\draw[MyBlue,->] (F)--(A) node [midway,above] {$\lambda$};
\draw[MyBlue,->,bend right=40] (E) to node [midway,below] {$\lambda(i)$} (A) ;

\end{tikzpicture}
\end{center}
if $i \in a$ and $\Phi \in \dHom(a, \lambda)$, then $\Phi(i) \in \dHom(1, \lambda(i))$.

\begin{example}[Trivial dependent arrows]\label{ex: dtrivial}
 \normalfont Every $\f$-category $\C C$  is turned into a $(\di)$-category. For every $a \in \C C$ and $\lambda \in \fHom(a)$
 let  $\dHom(a, \lambda) := \{\ast\}$. Then 
conditions $(\di_1)$ and $(\di_2)$ are trivially satisfied.
\end{example}

\begin{example}[Dependent arrows in sets and types]\label{ex: dsets}
 \normalfont In $\Set$, if $\lambda \colon I \to \Set$ is a family of sets over $I$, its dependent arrows are the 
 elements of the product set  
 $$\dHom(I, \lambda) := \prod_{i \in I}\lambda(i) := $$
 $$\bigg\{x \colon I \to \bigcup_{i \in I} \lambda(i) \mid
 \forall_{i \in I}\big(x_i := x(i)
 \in \lambda(i)\big)\bigg\}$$
 \begin{center}
\begin{tikzpicture}

\node (E) at (0,0) {$I$};
\node[right=of E] (F) {$ \bigcup_{i \in I} \lambda(i)$};
\node[right=of F] (A) {$I$.};

\draw[->] (E)--(F) node [midway,above] {$x$};
\draw[->] (F)--(A) node [midway,above] {$ \pr_i$};
\draw[->,bend right=30] (E) to node [midway,below] {$\id_X$} (A) ;

\end{tikzpicture}
\end{center}
 If $f \colon J \to I$ and $x \in \prod_{i \in I}\lambda(i)$, let 
 $$x \circ f \colon J \to \bigcup_{j \in J} \lambda(f(j)), \ \ 
(x \circ f)_j := x_{f(j)}, \ \ j \in J.$$
Clearly, conditions $(\di_1)$ and $(\di_2)$ are satisfied.
 The Pi-type in the category of types $\Type(\C U)$
 and the Pi-set in the category $\BishSet$ 
 behaves similarly (see~\cite{Pe20}, p.~47).
\end{example}

\begin{example}[Alternative dependent arrows in $\BishSet$]\label{ex: altdsets}
\normalfont One could have taken as family-arrows on a set $I$ the assignment routines $\lambda_0 \colon 
I \sto \D V_0$ without using the transport maps, and as dependent arrows over $I$ and $\lambda_0$ one could have considered 
the (fundamental) dependent assignment routines that just output an element of
$\lambda_0(i)$ for every given $i \in I$ (see~\cite{Pe20}, pp.~15-16).
\end{example}

\begin{example}[Dependent arrows of constant families]\label{ex: dconst}
\normalfont Any category $\C C$ is turned into a $\di$-category, if we define $\fHom(a) := C_0$, as in Example~\ref{ex: fconst},
and
$$\dHom(a, b) := \Hom(a, b) \ \ \& $$
$$f(g) := f \circ g \in \dHom(c, b \circ g) := \dHom(c, b) := \Hom(c, b),$$
for every $f \in \Hom(a, b)$ and $g \in \Hom(c, a)$.
\end{example}

Next we show that any $(\f, \Sigma)$-category, hence any type-category, is turned into a $\di$-category, in a canonical way.
For that we consider what Pitts calls in~\cite{Pi00}, p.~114, a \textit{global section}, or what 
Hofmann and Streicher call a \textit{dependent object} in~\cite{HS98}, pp.~91-92. The use of their category to represent
the Pi-category in~\cite{Pe22a} is a special case of Theorem~\ref{thm: typeisdi}. 
The arrow $\phi(f)$ defined next is noticed by Pitts, but here we highlight its special role in the following proof.

\begin{theorem}\label{thm: typeisdi}
 If $\C C$ is an $(\f, \Sigma)$-category, let for every $a \in \C C$ and $\lambda \in \dHom(a)$ 
 $$\Di_{a}\lambda := \bigg\{\phi \in \Hom\bigg(a, \sum_a \lambda\bigg) \mid \pr_1^{a, \lambda} \circ \phi = 1_a\bigg\}$$
\begin{center}
\begin{tikzpicture}

\node (E) at (0,0) {$a$};
\node[right=of E] (F) {$ \sum_a \lambda$};
\node[right=of F] (A) {$a$};

\draw[->] (E)--(F) node [midway,above] {$\phi$};
\draw[->] (F)--(A) node [midway,above] {$ \pr_1^{a, \lambda}$};
\draw[->,bend right=30] (E) to node [midway,below] {$1_a$} (A) ;

\end{tikzpicture}
\end{center}
be the  set of dependent objects of $\lambda$. With the dependent structure $\dHom(a, \lambda) := \Di_a \lambda$
the  $(\f, \Sigma)$-category $\C C$ becomes a $\di$-category.
\end{theorem}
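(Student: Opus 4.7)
The plan is to use the pullback square defining $\Sigma_\lambda f$ to extract the application operation $\phi \mapsto \phi(f)$ via the universal property, and then to verify the axioms $(\di_1)$ and $(\di_2)$ by uniqueness. Given $\phi \in \Di_a \lambda$ and $f \in \Hom(b, a)$, I observe that the pair of arrows $\phi \circ f \colon b \to \sum_a \lambda$ and $1_b \colon b \to b$ satisfy the compatibility condition needed to invoke the pullback: indeed, $\pr_1^{a, \lambda} \circ (\phi \circ f) = (\pr_1^{a, \lambda} \circ \phi) \circ f = 1_a \circ f = f \circ 1_b$. Hence there is a unique $\phi(f) \colon b \to \sum_b(\lambda \circ f)$ with $\Sigma_\lambda f \circ \phi(f) = \phi \circ f$ and $\pr_1^{b, \lambda \circ f} \circ \phi(f) = 1_b$. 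The second equation is precisely the condition that $\phi(f) \in \Di_b(\lambda \circ f)$, so the application is well-typed.

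For $(\di_1)$, I would verify that $\phi$ itself satisfies the two equations uniquely characterising $\phi(1_a)$. Using $(\f_1)$ we get $\lambda \circ 1_a = \lambda$, so both $\sum_a(\lambda \circ 1_a) = \sum_a \lambda$ and $\pr_1^{a, \lambda \circ 1_a} = \pr_1^{a, \lambda}$; using $(\si_1)$, $\Sigma_\lambda 1_a = 1_{\sum_a \lambda}$. Then $\Sigma_\lambda 1_a \circ \phi = \phi = \phi \circ 1_a$ and $\pr_1^{a, \lambda} \circ \phi = 1_a$ both hold by hypothesis, so uniqueness forces $\phi(1_a) = \phi$.

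For $(\di_2)$, given $\fab$ (viewed as $\Hom(b,a)$) and $\gcb$, I will show that $[\phi(f)](g)$ satisfies the two equations determining $\phi(f \circ g)$. By $(\f_2)$, $\lambda \circ (f \circ g) = (\lambda \circ f) \circ g$, so the codomains agree and the second equation $\pr_1^{c, \lambda \circ (f \circ g)} \circ [\phi(f)](g) = 1_c$ is immediate from the defining property of $[\phi(f)](g)$. For the first equation, I compute, using $(\si_2)$ and the defining identities,
\begin{align*}
\Sigma_\lambda (f \circ g) \circ [\phi(f)](g) &= \big(\Sigma_\lambda f \circ \Sigma_{\lambda \circ f} g\big) \circ [\phi(f)](g) \\
&= \Sigma_\lambda f \circ \big(\Sigma_{\lambda \circ f} g \circ [\phi(f)](g)\big) \\
&= \Sigma_\lambda f \circ (\phi(f) \circ g) \\
&= (\Sigma_\lambda f \circ \phi(f)) \circ g = (\phi \circ f) \circ g = \phi \circ (f \circ g).
\end{align*}
Uniqueness in the outer pullback rectangle appearing after Definition~\ref{def: fscat} then yields $\phi(f \circ g) = [\phi(f)](g)$.

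The main obstacle I anticipate is purely bookkeeping: ensuring that the several coherence identities obtained from $(\f_1), (\f_2), (\si_1), (\si_2)$ line up so that the two candidates $\phi(f \circ g)$ and $[\phi(f)](g)$ live in the same hom-set and are characterised by the same pair of equations. Once this is made precise, the argument reduces to the universal property of the pullback and the definition of $\Di_a \lambda$ as the sections of $\pr_1^{a, \lambda}$.
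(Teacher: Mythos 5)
Your proposal is correct and follows essentially the same route as the paper: define $\phi(f)$ by the universal property of the pullback square for $\Sigma_\lambda f$, then verify $(\di_1)$ and $(\di_2)$ by uniqueness, using $(\f_1),(\f_2),(\si_1),(\si_2)$ to make the characterising equations line up. The only (immaterial) difference is in $(\di_2)$: you show that $[\phi(f)](g)$ satisfies the two equations characterising $\phi(f\circ g)$ in the outer pasted pullback, whereas the paper shows that $\phi(f\circ g)$ satisfies the equations characterising $[\phi(f)](g)$ in the inner one; both are valid uniqueness arguments and your direct computation is, if anything, slightly cleaner.
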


\begin{proof}
If $\phi \in \Di_{a}\lambda$ and $f \in \Hom(b, a)$ we define a global section $\phi(f) \in \Di_{b}(\lambda \circ f)$ 
as follows
\begin{center}
\resizebox{5cm}{!}{%
\begin{tikzpicture}

\node (E) at (0,0) {$b$};
\node[right=of E] (F) {$ \sum_b (\lambda \circ f)$};
\node[right=of F] (K) {};
\node[right=of K] (A) {$b.$};

\draw[->] (E)--(F) node [midway,above] {$\phi(f)$};
\draw[->] (F)--(A) node [midway,above] {$ \pr_1^{b, \lambda \circ f}$};
\draw[->,bend right=40] (E) to node [midway,below] {$1_b$} (A) ;

\end{tikzpicture}
}
\end{center}
As $\pr_1^{a, \lambda} \circ (\phi \circ f) = (\pr_1^{a, \lambda} \circ \phi) = 1_a \circ f = f = f \circ 1_b$,
\begin{center}
\resizebox{5cm}{!}{%
\begin{tikzpicture}

\node (E) at (0,0) {$ b $};
\node[right=of E] (K) {};
\node[right=of K] (F) {$a$};
\node[above=of F] (A) {$ \  \sum_a \lambda$};
\node[left=of A] (D) {$\sum_b (\lambda \circ f) \ \ \ $};
\node[left=of D] (G) {};
\node[above=of G] (H) {$b$};

\draw[->] (E) to node [midway,below] {$f$} (F) ;
\draw[->] (D) to node [midway,above] {$\Sigma_{\lambda}f$} (A) ;
\draw[->] (D)--(E) node [midway,left] {$\pr_1^{b, \lambda \circ f}$};
\draw[->] (A)--(F) node [midway,right] {$\pr_1^{a, \lambda}$};
\draw[->, bend right=50] (H) to node [midway,left] {$1_b$} (E);
\draw[->, bend left=30] (H) to node [midway,above] {$ \  \phi \circ f$} (A);
\draw[MyRed, ->,dashed] (H) to node [midway,above] {$ \ \ \ \phi(f)$} (D) ;

\end{tikzpicture}
}
\end{center}
the above outer diagram commutes, and as the above square is a pullback, let $\phi(f)$ be the unique arrow in 
$\Hom\big(b, \sum_b (\lambda \circ f)\big)$ that makes the above triangles commutative i.e., 
\begin{equation}\label{eq: g1}
 \phi \circ f = \big(\Sigma_{\lambda}f\big) \circ \phi(f),
\end{equation}
\begin{equation}\label{eq: g2}
 \pr_1^{b, \lambda \circ f} \circ \phi(f) = 1_b.
\end{equation}
Condition $(\di_1)$ follows from~(\ref{eq: g1}) and condition $(\si_1)$, since
$$\phi = \phi \circ 1_a = \big(\Sigma_{\lambda}1_a\big) \circ \phi(1_a) = 1_{\Sigma_a \lambda}  \circ \phi(1_a) = \phi(1_a).$$
If $g \in \Hom(c, b)$, then $\phi(f \circ g)$ is the unique arrow in $\Hom\big(c, \sum_c (\lambda \circ f 
\circ g)\big)$ such that 
\begin{center}
\resizebox{8cm}{!}{%
\begin{tikzpicture}

\node (E) at (0,0) {$\sum_c (\lambda \circ f) \circ g$};
\node[right=of E] (H) {};
\node[right=of H] (F) {$\sum_b (\lambda \circ f) $};
\node[below=of E] (A) {$c$};
\node[below=of F] (B) {$b$};
\node[right=of F] (K) {};
\node[right=of K] (G) {$\sum_a \lambda$};
\node [below=of G] (C) {$a$};
\node[left=of E] (S) {};
\node[above=of S] (T) {$c$};
\node[above=of E] (M) {};
\node[right=of M] (X) {$b$};

\draw[->] (E)--(F) node [midway,above] {$ \mathsmaller{\Sigma_{(\lambda \circ f)}g}$};
\draw[->] (F)--(G) node [midway,above] {$ \mathsmaller{\Sigma_{\lambda}f}$};
\draw[->] (B)--(C) node [midway,below] {$ f $};
\draw[->] (A)--(B) node [midway,below] {$ \ \ g$};
\draw[->] (E)--(A) node [midway,left] {$\pr_1^{c, (\lambda \circ f) \circ g}$};
\draw[->] (F) to node [midway,left] {$\pr_1^{b, \lambda \circ f}$} (B);
\draw[->] (G)--(C) node [midway,right] {$\pr_1^{a, \lambda}$};

\draw[->, bend right=65] (T) to node [midway,left] {$1_c$} (A);
\draw[MyRed, ->,dashed] (T) to node [midway,right] {$ \ \   \phi(f \circ g)$} (E) ;
\draw[->, bend left=25] (T) to node [midway,above] {$g$} (X);
\draw[->, bend left=25] (X) to node [midway,above] {$\phi \circ f$} (G);
\draw[MyRed, ->,dashed] (X) to node [midway,right] {$ \ \ \phi(f)$} (F) ;

\end{tikzpicture}
}
\end{center}
\begin{equation}\label{eq: g3}
 \phi \circ f \circ g = \big(\Sigma_{\lambda}(f \circ g)\big) \circ \phi(f \circ g),
\end{equation}
\begin{equation}\label{eq: g4}
 \pr_1^{c, \lambda \circ f \circ g} \circ \phi(f \circ g) = 1_c.
\end{equation}
Moreover, $[\phi(f)](g)$ is the unique arrow in $\Hom\big(c, \sum_c (\lambda \circ f 
\circ g)\big)$ such that 
\begin{center}
\resizebox{5cm}{!}{%
\begin{tikzpicture}

\node (E) at (0,0) {$ c $};
\node[right=of E] (K) {};
\node[right=of K] (F) {$  b$};
\node[above=of F] (A) {$  \  \mathsmaller{\sum_b (\lambda \circ f)}$};
\node[left=of A] (D) {$\mathsmaller{\sum_c (\lambda \circ f) \circ g} \ \ $};
\node[left=of D] (G) {};
\node[above=of G] (H) {$c$};

\draw[->] (E) to node [midway,below] {$g$} (F) ;
\draw[->] (D) to node [midway,above] {$\Sigma_{(\lambda \circ f)}g$} (A) ;
\draw[->] (D)--(E) node [midway,left] {$\pr_1^{c, \lambda \circ f \circ g}$};
\draw[->] (A)--(F) node [midway,right] {$\pr_1^{b, \lambda \circ f}$};
\draw[->, bend right=60] (H) to node [midway,left] {$1_c$} (E);
\draw[->, bend left=30] (H) to node [midway,above] {$ \ \ \  \phi(f) \circ g$} (A);
\draw[MyRed, ->,dashed] (H) to node [midway,above] {$ \ \ \ \ \ \ \ \ \ \ \ \ [\phi(f)](g)$} (D) ;

\end{tikzpicture}
}
\end{center}
\begin{equation}\label{eq: g5}
 \phi(f) \circ g = \Sigma_{(\lambda \circ f)}g \circ [\phi(f)](g),
\end{equation}
\begin{equation}\label{eq: g6}
 \pr_1^{c, \lambda \circ f \circ g} \circ [\phi(f)](g) = 1_c.
\end{equation}
In order to show condition $(\di_2)$, it suffices to show that $\phi(f \circ g)$ satisfies the last two equalities. Due 
to~(\ref{eq: g4}) we have that $\phi(f \circ g)$ satisfies~(\ref{eq: g6}). In order to show that it also satisfies~(\ref{eq: g5})
i.e., that the rectangle consisting of the two red arrows in the diagram above commutes, we use that the following rectangle
\begin{center}
\resizebox{8cm}{!}{%
\begin{tikzpicture}

\node (E) at (0,0) {$ b $};
\node[right=of E] (K) {};
\node[right=of K] (F) {$a$};
\node[above=of F] (A) {$ \ \  \sum_a \lambda$};
\node[left=of A] (D) {$\sum_b (\lambda \circ f) \ \  $};
\node[left=of D] (G) {};
\node[above=of G] (H) {$\ \ b$};
\node[above=of H] (L) {};
\node[left=of L] (U) {};
\node[left=of U] (S) {$c \ \  $};
\node[right=of H] (X) {};
\node[right=of X] (T) {$b$};
\node[below=of S] (W) {$ \ \ \ \mathsmaller{\sum_c (\lambda \circ f) \circ g}  $};

\draw[->] (E) to node [midway,below] {$f$} (F) ;
\draw[->] (D) to node [midway,above] {$\Sigma_{\lambda}f$} (A) ;
\draw[->] (D)--(E) node [midway,left] {$\pr_1^{b, \lambda \circ f}$};
\draw[->] (A)--(F) node [midway,right] {$\pr_1^{a, \lambda}$};
\draw[->, bend right=100] (S) to node [midway,left] {$g \  $} (E);
\draw[->, bend left=25] (T) to node [midway,right] {$ \ \phi \circ f$} (A);
\draw[MyRed, ->,dashed] (H) to node [midway,above] {$ \ \ \phi(f)$} (D) ;
\draw[->] (S)--(H) node [midway,right] {$ \ g$};
\draw[->, bend left=25] (S) to node [midway,right] {$\ \  g$} (T);
\draw[MyRed, ->,dashed] (S) to node [midway,right] {$  \mathsmaller{\phi(f \circ g)} $} (W);
\draw[->, bend right=20] (W) to node [midway,left] {$ \mathsmaller{\Sigma_{(\lambda \circ f)}g} \ \  $} (D);
\end{tikzpicture}
}
\end{center}
\vspace{-13mm}
is by Definition~\ref{def: fscat} a pullback, and one shows from the previous commutativities that both these arrows from $c$ to
$\sum_b (\lambda \circ f)$ make the corresponding left and right upper diagrams commutative, hence they are equal.
\end{proof}

\begin{example}[Dependent objects of constant families]\label{ex: exdo2}
\normalfont If $\C C$ has products, then by Example~\ref{ex: sconst} $\C C$ is an $(\f, \Sigma)$-category and if $b \in
\dHom(a)$, its collection of dependent objects is
$$\Di_{a} b := \big\{\phi \in \Hom(a, a \times b) \mid \pr_a \circ \phi = 1_a\big\}.$$
By Example~\ref{ex: dconst} the canonical $\di$-structure on $\C C$ is determined by the equality
$\dHom(a, b) := \Hom(a, b)$. The two $\di$-structures can be identified, as there is a bijection 
$$e \colon \Hom(a, b) \to \Di_{a} b,  \ \ \ e(f) := \langle 1_a, f \rangle,$$
$$j \colon \Di_{a} b \to \Hom(a, b), \ \ \ j(\phi) := \pr_b \circ \phi.$$
 \end{example}

%
%
%
%
%
%
%
%

\begin{definition}\label{def: dcat}
If $\C C$ is an $\di$-category with a locally small $\di$-structure, let
$$\dHom \colon \fHom(\C C)^{\op} \to \Set, \ \ \ \ (a, \lambda) \mapsto \dHom(a, \lambda),$$
$$[f \colon (b, \mu) \to (a, \lambda)] \mapsto \dHom(f) \colon \dHom(a, \lambda) \to \dHom(b, \mu), $$
$$[\dHom(f)](\Phi) := \Phi(f).$$ 
The category of dependent-arrows $\dHom(\C C)$, or $\C C_3$,  of $\C C$ is the category $\Sigma\big(\fHom(\C C), \dHom\big)$
of elements of $\fHom(\C C)$ over  the presheaf $\dHom$
i.e., $\dHom(\C C)$ has objects pairs $\big((a, \lambda), \Phi\big)$ with $a \in C_0$, $\lambda \in \fHom(a)$, and
$\Phi \in \dHom(a, \lambda)$. An arrow $f \colon \big((b, \mu), \Psi\big) \to 
\big((a, \lambda), \Phi\big)$ is an arrow $f \colon (b, \mu) \to (a, \lambda)$ in $\fHom(\C C)$, such that 
$\Psi = [\dHom(f)](\Phi) := \Phi(f)$.
\end{definition}

\section{Categories with dependent arrows and Sigma-objects}
\label{sec: dscats}

Next we assign to each $a$ in a $\di$-category $\C C$ and to each
$\lambda \in \dHom(a)$ a Sigma-object $\sum_a \lambda$, its first-projection-arrow
$\pr_1^{a, \lambda} \colon \sum_a \lambda \to a$ in
$C_1$, and also its second-projection-dependent arrow
$\pr_2^{a, \lambda} \in \dHom\big(\sum_a \lambda, \lambda \circ \pr_1^{a, \lambda}\big)$ in $C_3$.

\begin{definition}\label{def: dscat}
A $\di$-category $\C C$ has Sigma-objects, or is a $(\di, \Sigma)$-category, if\\[1mm]
\normalfont (i) 
\itshape For every $a $ and $\fba$ in $\C C$, there are operations
$$\sum_a \colon \fHom(a) \to C_0, \ \ \ \ \sum_a(\lambda) := \sum_a \lambda \in C_0,$$
$$\pr_1^a \colon \fHom(a) \to C_1, \ \ \ \ \pr_1^{a}(\lambda) := \pr_1^{a, \lambda} \colon \sum_a \lambda \to a,$$
$$\Sigma f \colon \fHom(a) \to C_1, \ \ \ \ \lambda \mapsto (\Sigma f)(\lambda) 
=: \Sigma_{\lambda}f$$
with which $\C C$ becomes an $(\f, \Sigma)$-category.\\[1mm]
\normalfont (ii) 
\itshape If $a \in \C C$, there is an operation $\pr_2^a \colon \fHom(a) \to C_3$
$$\pr_2^{a}(\lambda) := \pr_2^{a, \lambda} \in 
\dHom\big(\sum_a \lambda, \lambda \circ \pr_1^{a, \lambda}\big),$$
\begin{center}
\begin{tikzpicture}

\node (E) at (0,0) {$\sum_a \lambda$};
\node[right=of E] (K) {};
\node[right=of K] (F) {$a$};
\node[right=of F] (A) {.};

\draw[->] (E)--(F) node [midway,above] {$\pr_1^{a, \lambda}$};
\draw[MyBlue,->] (F)--(A) node [midway,above] {$\lambda$};
\draw[MyBlue,->,bend right=25] (E) to node [midway,below] {$\lambda \circ \pr_1^{a, \lambda}$} (A) ;

\end{tikzpicture}
\end{center}
where $\pr_2^{a, \lambda}$ is called the second-projection-dependent arrow of $\lambda$, such that 
for every $b \in \C C$ and $f \in \Hom(b, a)$ the following condition is satisfied
$$\pr_2^{b, \lambda \circ f} = \pr_2^{a, \lambda}\big(\Sigma_{\lambda}f\big).$$
\end{definition}

Notice that the equality in condition (ii) above is well-defined, as by definition we have that $\pr_2^{b, \lambda \circ f}$ is in 
$$\dHom\bigg(\sum_b (\lambda \circ f), (\lambda \circ f) \circ \pr_1^{b, \lambda \circ f}\bigg)  =$$
$$\dHom\bigg(\sum_b (\lambda \circ f), \lambda \circ (\pr_1^{a, \lambda} \circ \Sigma_{\lambda}f)\bigg)$$
\begin{center}
\begin{tikzpicture}

\node (E) at (0,0) {$ b $};
\node[right=of E] (K) {};
\node[right=of K] (F) {$a$};
\node[above=of F] (A) {$ \ \  \sum_a \lambda$};
\node [left=of A] (D) {$\sum_b (\lambda \circ f) \  \  $};
\node [right=of F] (G) {};

\draw[->] (E) to node [midway,below] {$f$} (F) ;
\draw[->] (D) to node [midway,above] {$\Sigma_{\lambda}f$} (A) ;
\draw[->] (D)--(E) node [midway,left] {$\pr_1^{b, \lambda \circ f}$};
\draw[->] (A)--(F) node [midway,right] {$\pr_1^{a, \lambda}$};
\draw[MyBlue,->] (F)--(G) node [midway,below] {$\lambda \ $};
\draw[MyBlue,->,bend right=45] (E) to node [midway,right] {} (G) ;

\end{tikzpicture}
\end{center}
and by Definition~\ref{def: dcat} we have that 
$$\pr_2^{a, \lambda}\big(\Sigma_{\lambda}f\big) \in \dHom\bigg(\sum_b (\lambda \circ f), 
(\lambda \circ \pr_1^{a, \lambda}) \circ \Sigma_{\lambda}f\bigg).$$

\begin{example}[Trivial projection-arrows]\label{ex: exdstrivial}
 \normalfont A $\f$-category $\C C$  is turned into a $(\di, \Sigma)$-category. Using the dependent-structure of
 Example~\ref{ex: dtrivial}, let $\pr_2^{a, \lambda} := \ast$.
\end{example}

Clearly, $\Type(\C U)$ and $\BishSet$ are  $(\di, \Sigma)$-categories, where the second-projection-arrow is defined 
in each case in the obvious way.

\begin{example}[Sigma-objects of constant families]\label{ex: exds2}
 \normalfont If $\C C$ has binary products, then it is turned into a $(\di, \Sigma)$-category as follows: by 
 Example~\ref{ex: sconst} $\C C$ is an $(\f, \Sigma)$-category, while if
 $$\pr_2^{a, b} := \pr_b \in \dHom(a \times b, b \circ \pr_a) :=$$
 $$\dHom(a \times b, b) := \Hom(a \times b, b),$$
 then by the definition of $f \times 1_b$ we get 
 $$\pr_2^{a, b}\big(\Sigma_b f\big) := \pr_2\big(\Sigma_b f\big) := \pr_b \circ (f \times 1_b) =$$
 $$\pr_b =: \pr_2^{a, b} = 
 \pr_2^{a, b \circ f}.$$
\end{example}

Extending Theorem~\ref{thm: typeisdi}, we have that a $(\f, \Sigma)$-category is a $(\di, \Sigma)$-category in
a canonical way.

\begin{theorem}\label{thm: typeisdsi}
 If $\C C$ is an $(\f, \Sigma)$-category, let for every $a \in \C C$ and $\lambda \in \dHom(a)$ the dependent arrow
 $$\pr_2^{a, \lambda} \in \Di_{\sum_a \lambda}\big(\lambda \circ \pr_1^{a, \lambda}\big) = $$
 $$\mathsmaller{\mathsmaller{
 \bigg\{\phi \in \Hom\big(\sum_a \lambda, \sum_{\sum_a \lambda}(\lambda \circ \pr_1^{a, \lambda})\big) 
 \mid \pr_1^{\sum_a \lambda, \lambda \circ \pr_1^{a, \lambda}} \circ \pr_2^{a, \lambda} = 1_{\sum_a \lambda}\bigg\}}}$$
\begin{center}
\resizebox{7cm}{!}{%
\begin{tikzpicture}

\node (E) at (0,0) {$\sum_a \lambda$};
\node[right=of E] (K) {};
\node[right=of K] (F) {$ \sum_{\sum_a \lambda} (\lambda \circ \pr_1^{a, \lambda})$};
\node[right=of F] (L) {};
\node[right=of L] (A) {$\sum_a \lambda$};

\draw[->] (E)--(F) node [midway,above] {$\pr_2^{a, \lambda} $};
\draw[->] (F)--(A) node [midway,above] {$ \pr_1^{\sum_a \lambda, \lambda \circ \pr_1^{a, \lambda}}$};
\draw[->,bend right=25] (E) to node [midway,below] {$1_{\sum_a \lambda}$} (A) ;

\end{tikzpicture}
}
\end{center}
defined as the unique arrow determined by the following pullback
\begin{center}
\resizebox{8cm}{!}{%
\begin{tikzpicture}

\node (E) at (0,0) {$  \sum_a \lambda$};
\node[right=of E] (K) {};
\node[right=of K] (F) {$a$};
\node[above=of F] (A) {$ \ \ \ \sum_a \lambda$};
\node[left=of A] (D) {$\mathsmaller{\sum_{\sum_a \lambda} (\lambda \circ \pr_1^{a, \lambda})} \ \ \ $};
\node[left=of D] (G) {};
\node[above=of G] (H) {$\sum_a \lambda$};

\draw[->] (E) to node [midway,below] {$\pr_1^{a, \lambda}$} (F) ;
\draw[->] (D) to node [midway,above] {$\mathsmaller{\Sigma_{\lambda}\pr_1^{a, \lambda}}$} (A) ;
\draw[->] (D)--(E) node [midway,left] {$\mathsmaller{\pr_1^{\sum_a \lambda, \lambda \circ \pr_1^{a, \lambda}}}$};
\draw[->] (A)--(F) node [midway,right] {$\pr_1^{a, \lambda}$};
\draw[->, bend right=60] (H) to node [midway,left] {$1_{\sum_a \lambda}$} (E);
\draw[->, bend left=30] (H) to node [midway,above] {$ \  1_{\sum_a \lambda}$} (A);
\draw[MyRed, ->,dashed] (H) to node [midway,above] {$ \ \  \ \pr_2^{a, \lambda}$} (D) ;

\end{tikzpicture}
}
\end{center}
Then $\C C$ becomes a $(\di, \Sigma)$-category.
\end{theorem}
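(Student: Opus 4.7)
The plan is to verify that the canonical $\di$-structure from Theorem~\ref{thm: typeisdi} (global sections), together with the definition of $\pr_2^{a,\lambda}$ via the displayed pullback, satisfies all the axioms of Definition~\ref{def: dscat}. Condition (i) of a $(\di,\Sigma)$-category is immediate, since $\C C$ is already an $(\f,\Sigma)$-category by hypothesis. The substantive content is condition (ii): that $\pr_2^{a,\lambda}$ is a genuine dependent arrow of the claimed fibre, and that the coherence identity $\pr_2^{b,\lambda\circ f}=\pr_2^{a,\lambda}(\Sigma_\lambda f)$ holds for every $f\in\Hom(b,a)$.

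First, I would check that $\pr_2^{a,\lambda}$ is a legitimate global section. The outer diagram of the displayed pullback commutes trivially because both legs equal $\pr_1^{a,\lambda}$, so the universal property produces a unique arrow satisfying simultaneously $\pr_1^{\sum_a\lambda,\lambda\circ\pr_1^{a,\lambda}}\circ\pr_2^{a,\lambda}=1_{\sum_a\lambda}$ and $\Sigma_\lambda\pr_1^{a,\lambda}\circ\pr_2^{a,\lambda}=1_{\sum_a\lambda}$. The first of these is precisely the defining condition that places $\pr_2^{a,\lambda}$ in $\Di_{\sum_a\lambda}(\lambda\circ\pr_1^{a,\lambda})$.

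I would then turn to the key identity. Using $(\f_2)$ together with the pullback commutativity $\pr_1^{a,\lambda}\circ\Sigma_\lambda f=f\circ\pr_1^{b,\lambda\circ f}$, the family-arrows $(\lambda\circ\pr_1^{a,\lambda})\circ\Sigma_\lambda f$ and $(\lambda\circ f)\circ\pr_1^{b,\lambda\circ f}$ coincide, so both $\pr_2^{a,\lambda}(\Sigma_\lambda f)$ and $\pr_2^{b,\lambda\circ f}$ live in the same hom-set. Since $\pr_2^{b,\lambda\circ f}$ is uniquely characterised by the two triangle conditions $\pr_1^{\sum_b(\lambda\circ f),\ldots}\circ(-)=1_{\sum_b(\lambda\circ f)}$ and $\Sigma_{\lambda\circ f}\pr_1^{b,\lambda\circ f}\circ(-)=1_{\sum_b(\lambda\circ f)}$, it suffices to verify both for $\pr_2^{a,\lambda}(\Sigma_\lambda f)$. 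The first follows directly from the defining condition of the application $\phi(\Sigma_\lambda f)$ with $\phi=\pr_2^{a,\lambda}$ supplied by the proof of Theorem~\ref{thm: typeisdi}.

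The main obstacle is the second triangle condition. I would set $\alpha:=\Sigma_{\lambda\circ f}\pr_1^{b,\lambda\circ f}\circ\pr_2^{a,\lambda}(\Sigma_\lambda f)$, an endomorphism of $\sum_b(\lambda\circ f)$, and compute two composites. For $\Sigma_\lambda f\circ\alpha$, I apply $(\si_2)$ to rewrite $\Sigma_\lambda f\circ\Sigma_{\lambda\circ f}\pr_1^{b,\lambda\circ f}=\Sigma_\lambda(f\circ\pr_1^{b,\lambda\circ f})=\Sigma_\lambda(\pr_1^{a,\lambda}\circ\Sigma_\lambda f)=\Sigma_\lambda\pr_1^{a,\lambda}\circ\Sigma_{\lambda\circ\pr_1^{a,\lambda}}\Sigma_\lambda f$, then invoke the defining equation $\pr_2^{a,\lambda}\circ\Sigma_\lambda f=\Sigma_{\lambda\circ\pr_1^{a,\lambda}}\Sigma_\lambda f\circ\pr_2^{a,\lambda}(\Sigma_\lambda f)$ and the triangle $\Sigma_\lambda\pr_1^{a,\lambda}\circ\pr_2^{a,\lambda}=1_{\sum_a\lambda}$, to obtain $\Sigma_\lambda f\circ\alpha=\Sigma_\lambda f$. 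For $\pr_1^{b,\lambda\circ f}\circ\alpha$, the pullback commutativity for $\Sigma_{\lambda\circ f}\pr_1^{b,\lambda\circ f}$ together with the first triangle condition already established gives $\pr_1^{b,\lambda\circ f}\circ\alpha=\pr_1^{b,\lambda\circ f}$. Both $\alpha$ and $1_{\sum_b(\lambda\circ f)}$ are therefore arrows from $\sum_b(\lambda\circ f)$ making the original pullback square for $\Sigma_\lambda f$ commute with the same two legs $\Sigma_\lambda f$ and $\pr_1^{b,\lambda\circ f}$; uniqueness in the pullback forces $\alpha=1_{\sum_b(\lambda\circ f)}$, completing the verification.
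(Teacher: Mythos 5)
Your proof is correct, and it reaches the conclusion by the same essential mechanism as the paper: both arguments rest on the strictness condition $(\si_2)$, on the identification of the family-arrows $(\lambda \circ \pr_1^{a,\lambda}) \circ \Sigma_{\lambda}f$ and $(\lambda \circ f) \circ \pr_1^{b,\lambda\circ f}$ via $(\f_2)$ and the commutativity of the square defining $\Sigma_{\lambda}f$, and on the uniqueness of mediating arrows into pullbacks, with the decisive computation in both cases being $\Sigma_{\lambda}\pr_1^{a,\lambda} \circ \pr_2^{a,\lambda} \circ \Sigma_{\lambda}f = \Sigma_{\lambda}f$. The organization differs, though: the paper pastes the relevant squares, uses the pullback lemma to recognize the two composite outer rectangles as one and the same pullback, and then exhibits $\pr_2^{b,\lambda\circ f}$ and $\pr_2^{a,\lambda}(\Sigma_{\lambda}f)$ as the unique mediator of a single cone into that pasted pullback; you instead stay with the small defining pullbacks, characterize $\pr_2^{b,\lambda\circ f}$ by its two triangle identities, read off the first from equation~(\ref{eq: g2}) of Theorem~\ref{thm: typeisdi}, and discharge the second by showing that your endomorphism $\alpha$ of $\sum_b(\lambda\circ f)$ and the identity both mediate the same cone over the basic square for $\Sigma_{\lambda}f$. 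Your route avoids the pullback-pasting step entirely at the cost of one extra uniqueness argument, which arguably makes the diagram chase a little more elementary and easier to audit; the mathematical content is the same.
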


\begin{proof}
By Theorem~\ref{thm: typeisdi} we have that $\C C$ is a $\di$-category. By the commutativity of the upper,
left triangle we have that $\pr_2^{a, \lambda}$ is in $\Di_{\sum_a \lambda}\big(\lambda \circ \pr_1^{a, \lambda}\big)$.
Hence, it suffices only to show that the dependent arrow 
$\pr_2^{a, \lambda}$ defined above satisfies the equality in condition (ii) of Definition~\ref{def: dscat}. 
If $f \in \Hom(b, a)$,
then by definition the second-projection-dependent arrow $\pr_2^{b, \lambda \circ f}$ is uniquely determined by the following 
left pullback
\begin{center}
\resizebox{9cm}{!}{%
\begin{tikzpicture}

\node (E) at (0,0) {$\mathsmaller{\sum_{\sum_b (\lambda \circ f)} [(\lambda \circ f) \circ \pr_1^{b, \lambda \circ f}]}$};
\node[right=of E] (H) {};
\node[right=of H] (F) {$\mathsmaller{\sum_b (\lambda \circ f)} $};
\node[below=of E] (A) {$\mathsmaller{\mathsmaller{\sum_b (\lambda \circ f)}}$};
\node[below=of F] (B) {$b$};
\node[right=of F] (K) {};
\node[right=of K] (G) {$\mathsmaller{\sum_a \lambda}$};
\node [below=of G] (C) {$a$};
\node[left=of E] (S) {};
\node[above=of S] (T) {$\mathsmaller{\sum_b (\lambda \circ f)}$};
\node[above=of E] (M) {};

\draw[->] (E)--(F) node [midway,above] {$ \mathsmaller{\Sigma_{(\lambda \circ f)}\pr_1^{b, \lambda \circ f}}$};
\draw[->] (F)--(G) node [midway,above] {$ \mathsmaller{\Sigma_{\lambda}f}$};
\draw[->] (B)--(C) node [midway,below] {$ f $};
\draw[->] (A)--(B) node [midway,below] {$ \ \ \pr_1^{b, \lambda \circ f}$};
\draw[->] (E)--(A) node [midway,right] {$\mathsmaller{\pr_1^{\sum_b (\lambda \circ f), (\lambda \circ f) \circ \pr_1^{b, 
\lambda \circ f}}}$};
\draw[->] (F) to node [midway,right] {$\mathsmaller{\pr_1^{b, \lambda \circ f}}$} (B);
\draw[->] (G)--(C) node [midway,left] {$\mathsmaller{\pr_1^{a, \lambda}}$};

\draw[->, bend right=45] (T) to node [midway,left] {$\mathsmaller{1_{\sum_b (\lambda \circ f)}}$} (A);
\draw[MyRed, ->,dashed] (T) to node [midway,right] {$ \ \   \pr_2^{b, \lambda \circ f}$} (E) ;
\draw[->, bend left=25] (T) to node [midway,below] {$\mathsmaller{1_{\sum_b (\lambda \circ f)}}$} (F);

\end{tikzpicture}
}
\end{center}
and by the definition of $\phi(f)$ in the proof of Theorem~\ref{thm: typeisdi} the arrow
$\pr_2^{a, \lambda}\big(\Sigma_{\lambda}f\big)$ is uniquely determined by the following left pullback
\begin{center}
\resizebox{9cm}{!}{%
\begin{tikzpicture}

\node (E) at (0,0) {$\mathsmaller{\sum_{\sum_b (\lambda \circ f)} [(\lambda \circ \pr_1^{a, \lambda}) \circ \Sigma_{\lambda} f]}$};
\node[right=of E] (H) {};
\node[right=of H] (F) {$\mathsmaller{\sum_{\sum_a \lambda} (\lambda \circ \pr_1^{a, \lambda})}$};
\node[below=of E] (A) {$\sum_b (\lambda \circ f)$};
\node[below=of F] (B) {$\mathsmaller{\sum_a \lambda}$};
\node[right=of F] (K) {};
\node[right=of K] (G) {$\sum_a \lambda$};
\node [below=of G] (C) {$a$};
\node[left=of E] (S) {};
\node[above=of S] (T) {$\sum_b (\lambda \circ f)$};
\node[above=of E] (M) {};

\draw[->] (E)--(F) node [midway,above] {$ \mathsmaller{\Sigma_{\big(\lambda \circ \pr_1^{a, \lambda}\big)}(\Sigma_{\lambda}f)}$};
\draw[->] (F)--(G) node [midway,above] {$ \mathsmaller{\Sigma_{\lambda}\pr_1^{a, \lambda}}$};
\draw[->] (B)--(C) node [midway,below] {$\pr_1^{a, \lambda}$};
\draw[->] (A)--(B) node [midway,below] {$\Sigma_{\lambda}f$};
\draw[->] (E)--(A) node [midway,right] {$\mathsmaller{\pr_1^{\sum_b (\lambda \circ f), (\lambda \circ f) \circ \pr_1^{b, \lambda \circ f}}}$};
\draw[->] (F) to node [midway,right] {$\mathsmaller{\pr_1^{\sum_a \lambda, \lambda \circ \pr_1^{a, \lambda}}}$} (B);

\draw[->] (G)--(C) node [midway,left] {$\mathsmaller{\pr_1^{a, \lambda}}$};

\draw[->, bend right=45] (T) to node [midway,left] {$1_{\sum_b (\lambda \circ f)}$} (A);

\draw[MyRed, ->,dashed] (T) to node [midway,right] {$ \ \  \pr_2^{a, \lambda}\big(\Sigma_{\lambda}f\big)$} (E) ;
\draw[->, bend left=25] (T) to node [midway,above] {$\pr_2^{a, \lambda} \circ \Sigma_{\lambda} f$} (F);

\draw[->, bend left=40] (T) to node [midway,above] {$\Sigma_\lambda f$} (G);

\end{tikzpicture}
}
\end{center}
By the commutativity of the right rectangle of the first diagram above we have that 
$$\sum_{\sum_b (\lambda \circ f)} \big[(\lambda \circ f) \circ \pr_1^{b, \lambda \circ f}\big] =
\sum_{\sum_b (\lambda \circ f)} \big[(\lambda \circ \pr_1^{a, \lambda}) \circ \Sigma_{\lambda} f\big]$$
and the composition of the below outer arrows in both big diagrams above are equal. By condition $(\si_2)$ we have that the 
 composition of the upper outer arrows in both big diagrams above are equal, as
$$\big(\Sigma_{\lambda} f\big) \circ \Sigma_{(\lambda \circ f)}\pr_1^{b, \lambda \circ f} = \Sigma_{\lambda}\big(f \circ 
\pr_1^{b, \lambda \circ f}\big)$$
and
$$\big(\Sigma_{\lambda}\pr_1^{a, \lambda}\big) \circ \Sigma_{\big(\lambda \circ \pr_1^{a, \lambda}\big)}(\Sigma_{\lambda}f)
= $$
$$\Sigma_{\lambda} \big(\pr_1^{a, \lambda} \circ \Sigma_{\lambda} f\big) = \Sigma_{\lambda}\big(f \circ 
\pr_1^{b, \lambda \circ f}\big).$$
Consequently, the two arrows $\pr_2^{b, \lambda \circ f}$ and $\pr_2^{a, \lambda}\big(\Sigma_{\lambda}f\big)$ are equal,
as by the pullback lemma the outer diagrams above, which are equal, are also pullbacks and as by the definition of 
$\pr_2^{a, \lambda}$ 
we have that
\begin{center}
\resizebox{8cm}{!}{%
\begin{tikzpicture}

\node (E) at (0,0) {$\sum_b (\lambda \circ f) \ $};
\node[right=of E] (K) {};
\node[right=of K] (L) {};
\node[right=of L] (F) {$a$};
\node[above=of F] (A) {$ \   \sum_a \lambda$};
\node [left=of A] (S) {};
\node[left=of S] (D) {$\mathsmaller{\sum_{\sum_b (\lambda \circ f)} [(\lambda \circ f) \circ \pr_1^{b, \lambda \circ f}]} \  \ \ $};
\node[left=of D] (G) {};
\node[above=of G] (H) {$\sum_b (\lambda \circ f)$};

\draw[->] (E) to node [midway,below] {$f \circ \pr_1^{b, \lambda \circ f}$} (F) ;
\draw[->] (D) to node [midway,above] {$\mathsmaller{\Sigma_{\lambda}\big(f \circ 
\pr_1^{b, \lambda \circ f}\big)}$} (A) ;
\draw[->] (D)--(E) node [midway,right] {$\mathsmaller{\pr_1^{\sum_b (\lambda \circ f), (\lambda \circ f) \circ \pr_1^{b, \lambda \circ f}}}$};
\draw[->] (A)--(F) node [midway,right] {$\pr_1^{a, \lambda}$};
\draw[->, bend right=45] (H) to node [midway,left] {$ 1_{\sum_b (\lambda \circ f)}$} (E);
\draw[->, bend left=30] (H) to node [midway,above] {$ \  \Sigma_{\lambda} f$} (A);
\draw[MyRed, ->,dashed] (H) to node [midway,above] {$ \ \  \ \ \pr_2^{b, \lambda \circ f}$} (D) ;
\draw[MyRed, ->,dashed] (H) to node [midway,below] {$\mathsmaller{\pr_2^{a, \lambda}(\Sigma_{\lambda}f)} \ \ \ \ \ $} (D) ;

\end{tikzpicture}
}
\end{center}
$$\Sigma_{\lambda}\pr_1^{a, \lambda} \circ \pr_2^{a, \lambda} \circ \Sigma_{\lambda} f = 1_{\sum_a \lambda}
\circ \Sigma_{\lambda} f =  \Sigma_{\lambda} f,$$
and by the uniqueness property of the last pullback, we get the required equality.  
\end{proof}

If $\C C$ is $(\di, \Sigma)$-category, one can relate the second-projection arrow generated by the previous
theorem from its $(\f, \Sigma)$-structure to its given one.

\begin{example}[Dependent objects of constant families]\label{ex: expr22}
 \normalfont If $\C C$ has binary products, then it is turned into a $(\di, \Sigma)$-category in Example~\ref{ex: exds2}.
 As by Example~\ref{ex: sconst} $\C C$ is a $(\f, \Sigma)$-category, its induced second-projection-dependent arrow from
 Theorem~\ref{thm: typeisdsi} satisfies 
 \begin{center}
 \resizebox{7cm}{!}{%
\begin{tikzpicture}

\node (E) at (0,0) {$\sum_a b$};
\node[right=of E] (K) {};
\node[right=of K] (F) {$ \sum_{\sum_a b} (b \circ \pr_1^{a, b})$};
\node[right=of F] (L) {};
\node[right=of L] (A) {$\sum_a b$};

\draw[->] (E)--(F) node [midway,above] {$\pr{'}_2^{a, b} $};
\draw[->] (F)--(A) node [midway,above] {$ \pr_1^{\sum_a b, b \circ \pr_1^{a, b}}$};
\draw[->,bend right=25] (E) to node [midway,below] {$1_{\sum_a b}$} (A) ;

\end{tikzpicture}
}
\end{center}
i.e.,
\begin{center}
\resizebox{7cm}{!}{%
\begin{tikzpicture}

\node (E) at (0,0) {$a \times b$};
\node[right=of E] (K) {};
\node[right=of K] (F) {$ (a \times b) \times b$};
\node[right=of F] (L) {};
\node[right=of L] (A) {$a \times b$};

\draw[->] (E)--(F) node [midway,above] {$\pr{'}_2^{a, b} $};
\draw[->] (F)--(A) node [midway,above] {$ \pr_{a \times b}$};
\draw[->,bend right=25] (E) to node [midway,below] {$1_{a \times b}$} (A) ;

\end{tikzpicture}
}
\end{center}
and it is defined as the unique arrow determined by the following pullback
\begin{center}
\resizebox{6cm}{!}{%
\begin{tikzpicture}

\node (E) at (0,0) {$  a \times b \ \ \ \ $};
\node[right=of E] (K) {};
\node[right=of K] (F) {$ \ \ \ a$};
\node[above=of F] (A) {$ \mathsmaller{a \times b} $};
\node [left=of A] (S) {};
\node[left=of S] (D) {$ \ \mathsmaller{(a \times b) \times b}  $};
\node[left=of D] (G) {};
\node[above=of G] (H) {$a \times b$};

\draw[->] (E) to node [midway,below] {$\pr_a$} (F) ;
\draw[->] (D) to node [midway,above] {$\mathsmaller{\pr_a \times 1_b}$} (A) ;
\draw[->] (D)--(E) node [midway,left] {$\pr_{a \times b}$};
\draw[->] (A)--(F) node [midway,right] {$\pr_a$};
\draw[->, bend right=60] (H) to node [midway,left] {$1_{a \times b}$} (E);
\draw[->, bend left=30] (H) to node [midway,above] {$ \  1_{a \times b}$} (A);
\draw[MyRed, ->,dashed] (H) to node [midway,above] {$ \ \ \ \ \ \ \pr{'}_2^{a, b}$} (D) ;

\end{tikzpicture}
}
\end{center}
As one can show that $\pr{'}_2^{a, b} = 1_{a \times b} \times \pr_b$, one can identify $\pr{'}_2^{a, b}$ with the
$\di$-arrow $\pr_2^{a, b} = \pr_b$ from Example~\ref{ex: exds2}.
\end{example}

In a $(\di, \Sigma)$-category $\C C$ with a terminal object $1$ we can recover the standard equality
$z = \big(\pr_1(z), \pr_2(z)\big)$, where $z$ is an element of the Sigma-type (set) in $\MLTT$ $(\BST)$.
If $i \in a$, $\lambda \in \fHom(a)$ and $\Phi \in \dHom(a, \lambda)$, then $\Phi(i) \in \dHom(1, \lambda(i))$
\vspace{-4mm}
\begin{center}
\begin{tikzpicture}

\node (E) at (0,0) {$1$};
\node[right=of E] (F) {$a$};
\node[right=of F] (A) {};

\draw[>->] (E)--(F) node [midway,above] {$i$};
\draw[MyBlue,->] (F)--(A) node [midway,above] {$\lambda$};
\draw[MyBlue,->,bend right=40] (E) to node [midway,below] {$\lambda(i)$} (A) ;

\end{tikzpicture}
\end{center}
\vspace{-4mm}
and by Definition~\ref{def: dcat} we have that
\begin{equation}\label{eq: prelement}
\pr_2^{a, \lambda}\big(\Sigma_{\lambda}i\big) = \pr_2^{1, \lambda(i)}
\end{equation}
\begin{center}
\resizebox{5cm}{!}{%
\begin{tikzpicture}

\node (E) at (0,0) {$\sum_1 \lambda(i) $};
\node[right=of E] (K) {};
\node[right=of K] (F) {$\sum_a \lambda$};
\node[below=of E] (A) {$1 $};
\node [below=of F] (D) {$a$.};

\draw[>->] (E) to node [midway,above] {$\Sigma_{\lambda}i$} (F) ;
\draw[>->] (A) to node [midway,below] {$i$} (D) ;
\draw[->] (E)--(A) node [midway,left] {$!$};
\draw[->] (F)--(D) node [midway,right] {$\pr_1^{a, \lambda}$};

\end{tikzpicture}
}
\end{center}

\begin{proposition}\label{prp: elsigma}
Let $\C C$ be a $(\di, \Sigma)$-category with a terminal object $1$, and 
$w, z \in \sum_a \lambda$, hence $\pr_2^{a, \lambda}(z) \in \dHom\big(1, (\lambda \circ \pr_1^{a, \lambda})(z)\big)$,
\begin{center}
\resizebox{4cm}{!}{%
\begin{tikzpicture}

\node (E) at (0,0) {$1$};
\node[right=of E] (F) {$\sum_a \lambda$};
\node[right=of F] (A) {$a$};
\node[right=of A] (B) {.};

\draw[>->] (E)--(F) node [midway,below] {$z$};
\draw[->] (F)--(A) node [midway,below] {$\pr_1^{a, \lambda} $};
\draw[MyBlue,->] (A)--(B) node [midway,above] {$\lambda$};
\draw[MyBlue,->,bend left=80] (E) to node [midway,above] {$\lambda\big(\pr_1^{a, \lambda}(z)\big)$} (B) ;
\draw[>->,bend left=40] (E) to node [midway,above] {$\pr_1^{a, \lambda}(z)$} (A) ;

\end{tikzpicture}
}
\end{center}
\normalfont (i) 
\itshape 
If $u \in \sum_1 \lambda\big(\pr_1^{a, \lambda}(z)\big)$ is the unique global element of 
$\sum_1 \lambda\big(\pr_1^{a, \lambda}(z)\big)$ determined by the following pullback
 \begin{center}
 \resizebox{6cm}{!}{%
\begin{tikzpicture}

\node (E) at (0,0) {$\mathsmaller{\sum_1 \lambda\big(\pr_1^{a, \lambda}(z)\big)} $};
\node[right=of E] (K) {};
\node[right=of K] (F) {$\sum_a \lambda$};
\node[below=of E] (A) {$1 $};
\node [below=of F] (D) {$a$.};
\node[left=of E] (G) {};
\node[above=of G] (H) {$1$};

\draw[>->] (E) to node [midway,above] {$\mathsmaller{\Sigma_{\lambda}\pr_1^{a, \lambda}(z)}$} (F) ;
\draw[>->] (A) to node [midway,below] {$\pr_1^{a, \lambda}(z)$} (D) ;
\draw[->] (E)--(A) node [midway,left] {$!$};
\draw[->] (F)--(D) node [midway,right] {$\pr_1^{a, \lambda}$};
\draw[->, bend right=50] (H) to node [midway,left] {$1_1$} (A);
\draw[->, bend left=30] (H) to node [midway,above] {$ \ z$} (F);
\draw[MyRed, ->,dashed] (H) to node [midway,above] {$ \ u$} (E) ;
\draw[->] (E)--(A) node [midway,left] {$!$};

\end{tikzpicture}
}
\end{center}
the following equations hold:
\begin{equation}\label{eq: pr0}
! \circ u = 1_1,
\end{equation}
\begin{equation}\label{eq: pr1}
z = \big(\Sigma_{\lambda}\pr_1^{a, \lambda}(z)\big) \circ u,
\end{equation}
\begin{equation}\label{eq: pr2}
\pr_2^{a, \lambda}(z) = \pr_2^{1, \lambda\big(\pr_1^{a, \lambda}(z)\big)}(u)
\end{equation}
\normalfont (ii) 
\itshape If $u{'} \in \sum_1 \lambda\big(\pr_1^{a, \lambda}(w)\big)$ is the unique global element of 
$\sum_1 \lambda\big(\pr_1^{a, \lambda}(w)\big)$ determined by the corresponding pullback for $w$, and $\lambda_{ij}$ is 
the corresponding transport arrow,
 \begin{center}
 \resizebox{6cm}{!}{%
\begin{tikzpicture}

\node (E) at (0,0) {$\sum_1 \lambda(j) $};
\node[right=of E] (K) {};
\node[right=of K] (F) {$\sum_a \lambda$};
\node[below=of E] (A) {$1 $};
\node [below=of F] (D) {$a$.};
\node[left=of E] (G) {};
\node[above=of G] (H) {$\sum_1 \lambda(i)$};

\draw[>->] (E) to node [midway,above] {$\Sigma_{\lambda}j$} (F) ;
\draw[>->] (A) to node [midway,below] {$j$} (D) ;
\draw[>->] (A) to node [midway,above] {$i$} (D) ;
\draw[->] (E)--(A) node [midway,left] {$!$};
\draw[->] (F)--(D) node [midway,right] {$\pr_1^{a, \lambda}$};
\draw[->, bend right=50] (H) to node [midway,left] {$!$} (A);
\draw[->, bend left=30] (H) to node [midway,above] {$ \  \Sigma_{\lambda}i$} (F);
\draw[MyRed, ->,dashed] (H) to node [midway,above] {$ \  \lambda_{ij}$} (E) ;
\draw[->] (E)--(A) node [midway,left] {$!!$};

\end{tikzpicture}
}
\end{center}
where $i := \pr_1^{a, \lambda}(z)$ and $j := \pr_1^{a, \lambda}(w)$, then 
\begin{equation}\label{eq: pr3}
z = w \TOT \pr_1^{a, \lambda}(z) = \pr_1^{a, \lambda}(w) \ \&  \ u{'} = \lambda_{ij} \circ u,
\end{equation}
\begin{equation}\label{eq: pr4}
z = w \To \pr_2^{a, \lambda}(z) = \pr_2^{a, \lambda}(w).
\end{equation}
\end{proposition}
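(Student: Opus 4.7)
The plan is to handle the equations in (i) first, since (pr0) and (pr1) are immediate from the defining pullback of $u$: the red arrow $u$ in the displayed square must make both the left triangle (giving $! \circ u = 1_1$) and the upper triangle (giving $z = \big(\Sigma_{\lambda}\pr_1^{a,\lambda}(z)\big) \circ u$) commute. So (pr0) and (pr1) are simply recorded from the universal property.

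The substantive step in part (i) is (pr2). Writing $i := \pr_1^{a,\lambda}(z)\colon 1 \to a$, the defining compatibility of Definition~\ref{def: dscat}(ii) applied with $f := i$ gives the equality of dependent arrows
\[
\pr_2^{1,\lambda \circ i} \;=\; \pr_2^{a,\lambda}\big(\Sigma_{\lambda} i\big),
\]
both living in $\dHom\big(\sum_1 \lambda(i),\, \lambda \circ \pr_1^{a,\lambda} \circ \Sigma_{\lambda}i\big)$ (note that the two codomains agree by the pullback defining $\sum_1 \lambda(i)$). Applying both sides to $u\colon 1 \to \sum_1 \lambda(i)$ as dependent arrows, the left side becomes $\pr_2^{1,\lambda(i)}(u)$, while the right side, by axiom $(\di_2)$, becomes $\pr_2^{a,\lambda}\big(\Sigma_{\lambda}i \circ u\big)$. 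By (pr1), $\Sigma_{\lambda}i \circ u = z$, which yields (pr2).

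For part (ii), the forward direction of (pr3) begins with the trivial observation that $z = w$ implies $i = j$ (precomposing both sides with $\pr_1^{a,\lambda}$). To identify $u'$ with $\lambda_{ij} \circ u$, I will invoke the uniqueness in the pullback that defines $u'$: I verify that $\lambda_{ij} \circ u$ satisfies the two defining equalities using the characterising properties of the transport arrow from Proposition~\ref{prp: transp1}, namely $\Sigma_{\lambda}j \circ \lambda_{ij} = \Sigma_{\lambda}i$ and $! \circ \lambda_{ij} = !$. Concretely, $\Sigma_{\lambda}j \circ (\lambda_{ij} \circ u) = \Sigma_{\lambda}i \circ u = z = w$ by (pr1) and the hypothesis, and $! \circ (\lambda_{ij} \circ u) = ! \circ u = 1_1$ by (pr0). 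The converse direction is a direct computation: assuming $i = j$ and $u' = \lambda_{ij} \circ u$, the analogous calculation gives $w = \Sigma_{\lambda}j \circ u' = \Sigma_{\lambda}i \circ u = z$.

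Finally, (pr4) follows from (pr3) together with (pr2). If $z = w$, then $i = j$ and $u' = \lambda_{ij} \circ u$; but when $i = j$ the identity $1_{\sum_1 \lambda(i)}$ satisfies both defining conditions of $\lambda_{ij}$ in the pullback of Proposition~\ref{prp: transp1}, so by uniqueness $\lambda_{ij} = 1_{\sum_1 \lambda(i)}$ and hence $u' = u$. Applying (pr2) on both sides gives $\pr_2^{a,\lambda}(w) = \pr_2^{1,\lambda(j)}(u') = \pr_2^{1,\lambda(i)}(u) = \pr_2^{a,\lambda}(z)$. The only real obstacle is keeping track of the precise $\dHom$-homsets in the derivation of (pr2), since both Definition~\ref{def: dscat}(ii) and $(\di_2)$ must be reconciled with the pullback-level equalities $\pr_1^{a,\lambda} \circ \Sigma_{\lambda}i = i \circ \pr_1^{1,\lambda(i)}$ and $\pr_1^{1,\lambda(i)} \circ u = 1_1$; everything else reduces to diagram-chasing with the transport data already produced in Proposition~\ref{prp: transp1}.
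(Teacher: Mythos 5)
Your argument is correct and follows the paper's proof in all essentials. Equations~(\ref{eq: pr0}) and~(\ref{eq: pr1}) are read off the two commuting triangles of the defining pullback of $u$, exactly as in the paper; your derivation of~(\ref{eq: pr2}) via Definition~\ref{def: dscat}(ii) with $f := i$, then $(\di_2)$, then~(\ref{eq: pr1}), is precisely the paper's computation (the instance of Definition~\ref{def: dscat}(ii) you invoke is the paper's equation~(\ref{eq: prelement}), just read in the opposite direction); and your two directions of~(\ref{eq: pr3}) match the paper's, with your appeal to uniqueness into the pullback defining $u{'}$ playing the same role as the paper's cancellation of the mono $\Sigma_{\lambda}j$. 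The one place you genuinely diverge is~(\ref{eq: pr4}): you observe that literal equality $i = j$ forces $\lambda(i) = \lambda(j)$ and $\Sigma_{\lambda}i = \Sigma_{\lambda}j$, so that $1_{\sum_1 \lambda(i)}$ satisfies the defining conditions of $\lambda_{ij}$ and hence $\lambda_{ij} = 1_{\sum_1 \lambda(i)}$ and $u{'} = u$, after which~(\ref{eq: pr2}) applied to $z$ and to $w$ finishes immediately. This is valid in the strict setting of the paper and is shorter than the paper's route, which instead carries $\lambda_{ij}$ through a longer chain using $(\di_2)$ and~(\ref{eq: prelement}) twice; the paper's computation only uses the defining equations $\Sigma_{\lambda}j \circ \lambda_{ij} = \Sigma_{\lambda}i$ and $u{'} = \lambda_{ij} \circ u$, so it is the version that would survive in a weakened setting where the transport arrows are not forced to collapse to identities, whereas your shortcut exploits that collapse.
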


\begin{proof}
 (i) Equations~(\ref{eq: pr0}) and~(\ref{eq: pr1}) correspond to the commutative triangles of the above diagram.
 By equation~(\ref{eq: pr0}) we
 get
 \begin{align*}
 \pr_2^{a, \lambda}(z) & = \pr_2^{a, \lambda}\bigg(\big(\Sigma_{\lambda}\pr_1^{a, \lambda}(z)\big) \circ u\bigg)\\
 & =  \bigg[\pr_2^{a, \lambda}\big(\Sigma_{\lambda}\pr_1^{a, \lambda}(z)\big)\bigg](u)\\
 & \stackrel{((\ref{eq: prelement})} =  \pr_2^{1, \lambda\big(\pr_1^{a, \lambda}(z)\big)}(u).
\end{align*}
(ii) If $z = w$, the  equality $\pr_1^{a, \lambda}(z) = \pr_1^{a, \lambda}(w)$ follows immediately. By equation~(\ref{eq: pr2})
we get
$$\big(\Sigma_{\lambda}j \circ  \lambda_{ij}\big)  \circ u  = \big(\Sigma_{\lambda}i\big) \circ u = z = w = \Sigma_{\lambda}j \circ u{'},$$
and as $\Sigma_{\lambda}j$ is a mono, we get $ \lambda_{ij} \circ u  = u{'}$. For the converse implication we have 
that\footnote{The equality $z = w$ relies only on the equality $u{'} = \lambda_{ij} \circ u $.}
$$w = \Sigma_{\lambda}j \circ u{'} =  \Sigma_{\lambda}j \circ  \lambda_{ij} \circ u = \Sigma_{\lambda}i \circ u = z.$$
Moreover, we have that
\begin{align*}
 \pr_2^{a, \lambda}(w) & \stackrel{((\ref{eq: pr2})} = \pr_2^{1, \lambda(j)}(u{'})\\
 & \stackrel{((\ref{eq: pr3})} = \pr_2^{1, \lambda(j)}(\lambda_{ij} \circ u)\\
 & = \big[\pr_2^{1, \lambda(j)}(\lambda_{ij})\big](u)\\
 & \stackrel{((\ref{eq: prelement})} = \big[\big[\pr_2^{a, \lambda}\big(\Sigma_{\lambda}j\big)\big](\lambda_{ij})\big](u)\\
 & = \big[\pr_2^{a, \lambda}\big(\Sigma_{\lambda}j \circ \lambda_{ij}\big)\big](u)\\
 & = \big[\pr_2^{a, \lambda}\big(\Sigma_{\lambda}i\big)\big](u)\\
 & \stackrel{((\ref{eq: prelement})} = \pr_2^{1, \lambda(i)}(u)\\
 & \stackrel{((\ref{eq: pr2})} = \pr_2^{a, \lambda}(z). \qedhere
\end{align*}
\end{proof}

Equations~(\ref{eq: pr1}) and~(\ref{eq: pr2})  
are the category-theoretic version of the equality $z = \big(\pr_1(z), \pr_2(z)\big)$ and
the equivalence~(\ref{eq: pr3}) is the category-theoretic version of the canonical equality 
on the Sigma-set in $\BST$ given in Example~\ref{ex: ssets}.

\section{Concluding comments}
\label{sec: concl}

To the arrow-structure $C_1$ of a category $\C C$ a family-structure $C_2$ was added implicitly already in the
definition of a category with attributes or of a type-category. Sigma-objects were studied in the ``two-dimensional world''
of $C_1$ and $C_2$ and dependency was defined for every object $a$, for every $\lambda \in \fHom(a)$, and every $\mu \in 
\fHom(\sum_a \lambda)$, as an appropriate family $\prod_{\lambda}\mu \in \fHom(a)$ (see~\cite{Pi00}, pp.~120-121.)
Here we added a third (independent) dimension $C_3$ of dependent arrows 
\tdplotsetmaincoords{60}{110}
\begin{center}
\begin{tikzpicture}[scale=2,tdplot_main_coords]
  
  \def\rvec{.8}
  \def\thetavec{30}
  \def\phivec{60}
  
  \coordinate (O) at (0,0,0);
  \draw[thick,->] (0,0,0) -- (1,0,0) node[below=]{$C_1$};
  \draw[thick,->] (0,0,0) -- (0,1,0) node[right=]{$C_2$};
  \draw[thick,->] (0,0,0) -- (0,0,1) node[above=]{$C_3$};
  
  

\end{tikzpicture}
\end{center}
that allowed us to express dependency through the third ``dimension'' $C_3$ alone, and independently from
Sigma-objects. Actually, the definition of Sigma-objects in the ``three-dimensional world'' of $C_1, C_2$ and $C_3$
incorporated the second-projection-dependent arrow providing a closer analogy to the study of the Sigma-type (set) in
$\MLTT$ $(\BST)$. The categorical formulation of dependency is fundamental in $\di$-categories, 
while it is very complicated and dependent to Sigma-objects in type-categories.

The importance of $\di$-categories also lies on the possibility of having  dependent arrows that are not generated from the 
Sigma-objects as the dependent objects, or the global sections, according to Theorem~\ref{thm: typeisdi}. 
We have dependent arrows ``before'' and independently
from the Sigma-construction, as in the categories of Examples~\ref{ex: dtrivial} and~\ref{ex: altdsets}.
These categories reflect the fundamental character of dependent arrows.

As it is noted in~\cite{BW12}, p.~331, ``one way of constructing fibrations is by the Grothendieck construction $\ldots$,
which is a generalisation of the semidirect product construction for monoids''. This function of the Grothendieck construction 
is a special case of the general function of a Sigma-object. Various categories of Sigma-objects can be defined in the framework of
$(\f, \Sigma)$-categories, or $(\di, \Sigma)$-categories, in which the first projection becomes a split 
fibration. 
This reinforces the choice of the Sigma-notation for Grothendieck categories and at 
the same time 
explains why the theory of $(\f, \Sigma)$-categories, or of $(\di, \Sigma)$-categories,
can be seen as a generalisation of the theory of Grothendieck categories. 
More relations and connections to fibrations need to be explored, as the defining clauses of a splitting cleavage for 
a Grothendieck fibration are clearly very similar to the strictness 
condition $(\si_1)$ and $(\si_2)$.

As it is mentioned in the Introduction, a family of sets can be described as a fibration instead of 
using a pointwise indexing. As family-arrows generalise the pointwise indexed families of sets, the 
cofamily-arrows generalise the families of sets determined by fibrations. Dually to family-arrows, a \textit{cofamily-arrow}
has a fixed 
codomain $b$ and composes in a coherent way with the arrows of $\C C$ with domain $b$:\\[1mm]
$(\cf_1)$ $ \ 1_b \circ p = p$
\begin{center}
\begin{tikzpicture}

\node (E) at (0,0) {};
\node[right=of E] (F) {$b$};
\node[right=of F] (A) {$b$};

\draw[->] (F)--(A) node [midway,above] {$1_b$};
\draw[MyGreen,->] (E)--(F) node [midway,above] {$p$};
\draw[MyGreen,->,bend right=40] (E) to node [midway,below] {$p$} (A) ;

\end{tikzpicture}
\end{center}
$(\cf_2)$ $ \ (g \circ f) \circ p =  g \circ (f \circ p)$
\begin{center}
\resizebox{4cm}{!}{%
\begin{tikzpicture}

\node (E) at (0,0) {};
\node[right=of E] (F) {$b$};
\node[right=of F] (A) {$c$};
\node[right=of A] (B) {$d$};

\draw[->] (A)--(B) node [midway,above] {$g$};
\draw[->] (F)--(A) node [midway,above] {$f \ $};
\draw[MyGreen,->] (E)--(F) node [midway,above] {$ \ p$};
\draw[->,bend right] (F) to node [midway,below] {$g \circ f$} (B) ;
\draw[MyGreen,->,bend right=60] (E) to node [midway,below] {$(g \circ f) \circ p$} (B) ;
\draw[MyGreen,->,bend left=45] (E) to node [midway,above] {$ \ f \circ p$} (A) ;
\draw[MyGreen,->,bend left=75] (E) to node [midway,above] {$g \circ (f \circ p)$} (B) ;

\end{tikzpicture}
}
\end{center}
Consequently, all notions presented here have a dual counterpart. The study of 
\textit{coSigma-objects} and \textit{codependent 
arrows} is a necessary complement to the development of a form of \textit{Dependent Category Theory} that was only 
started here.

\noindent
\textbf{Acknowledgement}\\[1mm]
I would like to thank Benno van den Berg for pointing~\cite{Pi00} to me during a discussion we had at an early stage of this work.

\end{document}